\newcommand{\R}{{\mat R}}
\newcommand{\N}{{\mat N}}
\newcommand{\ds}{\displaystyle}
\newcommand{\no}{\nonumber}
\newcommand{\be}{\begin{eqnarray}}
\newcommand{\ben}{\begin{eqnarray*}}
\newcommand{\en}{\end{eqnarray}}
\newcommand{\enn}{\end{eqnarray*}}
\newcommand{\pa}{\partial}
\newcommand{\ov}{\overline}
\newcommand{\G}{\Gamma}
\newcommand{\Om}{\Omega}
\newcommand{\mat}{\mathbb}
\newcommand{\se}{\setminus}
\newtheorem{theorem}{Theorem}[section]
\begin{document}
\renewcommand{\theequation}{\arabic{section}.\arabic{equation}}

\title{\bf Simultaneous recovery of a locally rough interface and the embedded obstacle with its surrounding medium}

\author{Jiaqing Yang\thanks{School of Mathematics and Statistics, Xi'an Jiaotong University,
Xi'an, Shaanxi 710049, China ({\tt jiaq.yang@mail.xjtu.edu.cn})}
\and
Jianliang Li\thanks{School of Mathematics and Statistics, Changsha University of Science
and Technology, Changsha 410114, China ({\tt lijl@amss.ac.cn})}
\and
Bo Zhang\thanks{LSEC, NCMIS and Academy of Mathematics and Systems Science, Chinese Academy
of Sciences, Beijing 100190, China and School of Mathematical Sciences, University of Chinese
Academy of Sciences, Beijing 100049, China ({\tt b.zhang@amt.ac.cn})}
}
\date{}

\maketitle

\begin{abstract}
Consider the scattering of time-harmonic point sources by an infinite locally rough interface with bounded obstacles embedded in the lower half-space. The model problem is first reduced to an equivalent 
integral equation formulation defined in a bounded domain, where the well-posedness is obtained in $L^p$ by the classical Fredholm theory. Then a global uniqueness theorem is proved for the inverse problem
 of recovering the locally rough interface, the embedded obstacles and the wave number in the lower-half space by means of near-field measurements above the interface. 
\vspace{.2in}

{\bf Keywords}: inverse acoustic scattering, Lippmann-Schwinger equation, uniqueness, rough interface, embedded obstacle.

\end{abstract}

\maketitle

\section{Introduction}
This paper is concerned with the two-dimensional inverse scattering of time-harmonic acoustic point sources by a locally rough interface with obstacles embedded in the lower half-space. This type of problems can find applications in diverse scientific areas such as radar, underwater exploration and non-destructive testing, where the shape, location and boundary conditions of both the interface and embedded obstacles need to be simultaneously
reconstructed by the measurements of the scattered fields taken on some certain subdomain of the upper-half space. 

\begin{figure}[htbp]
\centering
\includegraphics[width=5in, height=2.5in]{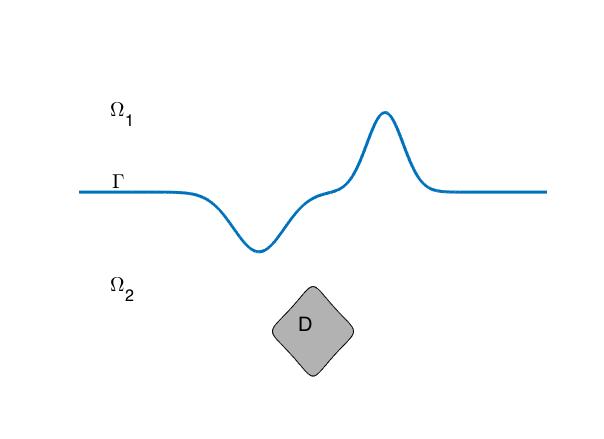}
\caption{The physical configuration of the scattering problem.}
\label{f1} 
\end{figure}

Let the scattering interface be denoted by a smooth curve $\Gamma:=\{(x_1,x_2)\in\R^2: x_2=f(x_1)\}$, where $f$ is assumed to be a Lipschitz continuous function with compact support.
This means that $\Gamma$ is just a local perturbation of the planar interface $\Gamma_0:=\{(x_1,x_2)\in\R^2: x_2=0\}$. The whole space $\R^2$ is then separated by $\G$ into 
the upper half-space $\Omega_1:=\{(x_1,x_2)\in\R^2: x_2>f(x_1)\}$ and the lower half-space $\Omega_2:=\{(x_1,x_2)\in\R^2: x_2<f(x_1)\}$, where a bounded obstacle $D$ is embedded into 
$\Om_2$ which is assumed to be of $C^{2}$-class. We refer the reader to Figure \ref{f1} for a geometrical configuration for the problem setting.

Consider the incident field to be generated by a point source
\ben
u^{\rm inc}(x, x_s) = \Phi_{\kappa_1}(x, x_s):=\frac{\rm i}{4}H_0^{(1)}(\kappa_1|x-x_s|),\qquad x_s\in\Om_1
\enn
which corresponds to the fundamental solution of the Helmholtz equation $\Delta \Phi_{\kappa_1}(x, x_s)+\kappa_1^2\Phi_{\kappa_1}(x, x_s)=-\delta(x-x_s)$ in $\R^2$. Here, $\delta$ is the Kronecker delta distribution.
Then the scattering of $u^{\rm inc}(x, x_s)$ by the scatterers $(\Gamma, D)$ can be modeled by 
\be\label{a1}
\left\{\begin{array}{lll}
         \Delta u+\kappa^2u=0  &\textrm{in}\;\; \R^2\se(\ov{D}\cup\{x_s\}), \\[2mm]
         \mathcal{B}u=0&\textrm{on}\;\; \pa{D},\\
         \ds\lim_{r\rightarrow \infty}\sqrt{r}\left(\frac{\partial u^{s}}{\partial r}-{\rm i}\kappa u^{s}\right)=0& {\rm for}\;\;r=|x|
       \end{array}
\right.
\en
if $D$ is an impenetrable obstacle, where $u$ denotes the total field consisting of the point source $u^{\rm inc}(x, x_s)$ and the scattered field $u^s(x, x_s)$ in $\Omega_1$, and $u:=u^{s}(x, x_s)$ denotes the transmitted field in $\Omega_2\setminus\overline{D}$. Here, $\kappa>0$ is the wave number satisfying 
$\kappa:=\kappa_1\in\R$ in $\Omega_1$ and $\kappa:=\kappa_2\in\R$ in $\Omega_2$. Moreover, $\mathcal{B}$ stands for the boundary condition on the boundary $\partial D$ satisfying $\mathcal{B}u:=u$ if $D$ is 
sound-soft, and $\mathcal{B}u:=\partial_{\nu} u+{\rm i}\lambda u$ for a continuous impedance function $\lambda(x)\geq0$ if $D$ is an imperfect obstacle. Here, $\nu$ is the outward normal vector directing into 
$\Omega_2\se\ov{D}$. If $\lambda(x)=0$, then $\mathcal{B}u$ is reduced to a Neumann boundary condition.

If $D$ is a penetrable obstacle, then the scattering of  $u^{\rm inc}(x, x_s)$ by the scatterers $(\Gamma, D)$ can be formulated by
\be\label{a2}
\left\{\begin{array}{lll}
         \Delta u+\kappa^2n(x)u=0&\textrm{in}\;\; \R^2\se\{x_s\}, \\[2mm]
         \ds\lim_{r\rightarrow \infty}\sqrt{r}\left(\frac{\partial u^{s}}{\partial r}-{\rm i}\kappa u^{s}\right)=0\;\;\quad & {\rm for}\;\;r=|x|,
       \end{array}
\right.
\en
where the total field $u:=u^{\rm inc}+u^{s}$ in $\Omega_1$ and $u:=u^{s}$ in $\Omega_2$ with $u^{s}$ being the scattered field,  $n\in L^\infty$ is the refractive index such that ${\rm Re}(n)>0$, ${\rm Im}(n)\geq0$ and $n=1$ in $\R^2\se\ov{D}$.

Given the interface $\G$, the embedded obstacle $D$ or the refractive index $n$, and the incident field $u^{\rm inc}$, the forward problem is to determine the distribution of the scattered wave $u^{s}$ in 
$\R^2$. There exists lots of references in the literature on the well-posedness of  Problem (\ref{a1}) or (\ref{a2}), if $D=\emptyset$. We refer to \cite{SE10, MT06} for the variational method and
\cite{LYZ13, DTS03, ZS03} for the integral equation method with employing a generalized Fredholm theory (cf. \cite{SZ97, SZ00}). We also refer to \cite{AIL05,JC00, DEKPS08, RG08, GHKMS05, LLLL15,LYZZ21,LZ10, WP10} for the case of a planar surface $\G$ and an embedded
obstacle $D$. Different from the previous works, in the first part of the paper we will propose a novel technique to establish the existence of a unique solution to Problem (\ref{a1}) or (\ref{a2}) by transferring 
the interface scattering problem (\ref{a1}) without the embedded obstacle $D$ into an equivalent Lippmann-Schwinger type integral equation defined in a bounded domain for which the well-posedness follows from a 
direct application of the classical Fredholm theory. Then the existence of the solution of Problem (\ref{a1}) or (\ref{a2}) can be obtained 
by making use of the technique of background Green's function. The main advantage of the method is to avoid the discussion of related operators in unbounded domain and can further leads to the $L^p$ $(p>1)$
solution of Problem (\ref{a1}) or (\ref{a2}) when the incident filed is induced by a family of hypersingular point sources.

Compared with the forward problem, the inverse problem we are interested in is to determine the locally rough interface $\Gamma$, the wave number $\kappa_2$ and the embedded obstacle $(D,\mathcal{B})$ or 
the refractive index $n$ by taking the measurements of the scattered fields in the domain $\Om_1$. To the best of authors' knowledge, no results are available in the literature for the simultaneous recovery of the interface and embedded obstacles. If $\Gamma$ is just a planar interface with an embedded obstacle in the lower half-space, or $\Gamma$ is an impenetrable surface or a locally rough interface without embedded obstacle, lots of works have investigated for the inverse acoustic and electromagnetic problems; see \cite{AIL05,GL13,GL14,GJ11,CR10,CG11,JC00,DEKPS08,DLLY17,KG80, RG08,GHKMS05,LLLL15,LB13, LZ13,LWZ19,LYZZ21,LZ10,WP10,YL18} and reference therein. Especially, it was shown in \cite{SC95} that a uniqueness result was first established for a Dirichlet surface by using the incident plane waves, if the homogeneous medium above the surface is lossy. It was shown in \cite{LZ10} that an embedded electromagnetic obstacle was uniquely determined in a two-layered lossy medium separated by a planar surface with respect to incident point sources, while the similar uniqueness result was obtained in \cite{YL18} for inverse acoustic scattering by an embedded penetrable obstacle in the lower-half space for which the background medium was allowed to be non-lossy. Moreover, 
many numerical methods have been extensively studied for the inverse problems such as  the MUSIC-type method \cite{AIL05}, the algorithms based on transformed field expansions \cite{GL13,GL14},   Newton-type algorithms \cite{GJ11,ZZ13}, the Kirsch-Kress schemes \cite{CR10,LZ13},  qualitative methods \cite{DLLY17,AL08}, the asymptotic factorization method \cite{RG08}, the time-domain singular source method in \cite{C03}, and direct sampling methods \cite{LLLL15}.
Furthermore, a related uniqueness result was recently obtained in \cite{LWZ19} for recovering an interface and a perfectly conducting obstacle embedded in the upper-half space by assuming the two-layered lossy medium.

In the second part of this paper, we are motivated by \cite{YZZ13} for inverse obstacle scattering in a bounded inhomogeneous medium to investigate the unique determination of $(\G,D,\mathcal{B},\kappa_2)$ and 
$(\G,n,\kappa_2)$ by measurements of the scattered fields on a line segment of $\Om_1$. To this end, based on the technique proposed for the model problem, we first establish uniform  a priori estimates of solutions of 
Problem (\ref{a1}) or (\ref{a2}) when the incident waves are induced by a family of hyper-singular point sources $\partial_{x_\ell} \Phi_{\kappa_1}(\cdot,x_s)$ for $\ell =1,2$ with $x_s$ approaching the local perturbation of the interface. Then the two group of solutions of Problem (\ref{a1}) or (\ref{a2}) associated with two different locally interfaces are coupled in a sufficiently small domain as an interior transmission problem
(ITP). The uniform $L^2$-regularity of the solutions will be auxiliarily obtained as $x_s$ approaches the interface by the well-posedness of the ITP, which will leads to a contradiction. The unique recovery of $\G$ is thus 
obtained. One advantage of this technique is that the background medium of the embedded obstacle can be simultaneously recovered in view of the scattered field data in a similar way. Finally, the inverse problem is reduced to the case of recovering the embedded obstacle into a known layered medium and the uniqueness directly follows from the standard discussions (cf. \cite{ALB08, CK13}).

The remaining of this paper is built up as follows. In Section 2, we briefly introduce some necessary function spaces and  the background Green function associated with a planar surface. In Section 3, 
we present a novel technique for the well-posedness of the scattering problem in a two-layered medium by reducing the model problem into an equivalently Lippmann-Schwinger type integral equation in a bounded domain. 
In Section 4, we prove a global uniqueness theorem for the inverse problem of simultaneously recovering locally rough interfaces
and the embedded obstacles with its surrounding homogeneous medium.

\section{Preliminaries}
\setcounter{equation}{0}

\subsection{Some useful function spaces} 

Let $\Omega$ be a bounded domain of $\R^2$ with a Lipschitz boundary $\partial \Omega$. Let $W^{m,p}(\Omega)$ denote the usual Sobolev space with index $m\in{\mathbb N}$ and $p\in [1,\infty)$, equipped with 
the norm 
\ben
\|u\|_{m,p}:=\left(\sum\limits_{|\alpha|\leq m} \|\pa^{\alpha}u\|^p_{L^p(\Omega)}\right)^{1/p}.
\enn
For $p=2$, we also write $W^{m,2}(\Omega)$ by the notation $H^{m}(\Omega)$ which is a Hilbert space under the inner product 
\ben
(u,v)_m: = \sum_{|\alpha|\leq m}(\pa^\alpha u,\pa^\alpha v)_{L^2(\Omega)}.
\enn 
For $m=0$, $W^{0,p}(\Omega)$ is reduced to the usual $L^p(\Omega)$ space consisting of all $L^p$-integrable functions on $\Omega$. Moreover, we also introduce the following function space
\ben
H^1_{\Delta}(\Omega):=\{u\in\mathcal{D'}(\Om)|u\in H^1(\Om),\;\Delta u\in L^2(\Om)\}
\enn
which is a Hilbert space with respect to the inner product 
\ben
(u,v)_{H^1_{\Delta}(\Om)}=(u,v)_{L^2(\Om)}+(\nabla u,\nabla v)_{L^2(\Om)}+(\Delta u,\Delta v)_{L^2(\Om)}
\quad{\rm for\;}u,v\in H^1_{\Delta}(\Om),
\enn
where $\mathcal{D'}(\Om)$ denotes the set consisting of all distribution functions defined on $C^\infty_0(\Om)$.

\subsection{The background Green's function}
In this subsection, we introduce the two-dimensional background Green's function associated with the Helmholtz equation in a two-layered medium separated by the planar surface $\Gamma_0$. Suppose the incident wave $u^{\rm inc}(x, x_s)$ is induced by a point source $\Phi_{\kappa}(x, x_s)$ for $\kappa=\kappa_1$ or $\kappa_2$, which means $u^{\rm inc}(x,x_s)=\Phi_{\kappa_1}(x,x_s)$ for $x_s\in \R^2_+:=\{(x_1,x_2)\in\R^2: x_2>0\}$ and $u^{\rm inc}(x,x_s)=\Phi_{\kappa_2}(x,x_s)$ for $x_s\in \R^2_-:=\{(x_1,x_2)\in\R^2: x_2<0\}$. Then the scattering of $u^{\rm inc}(x, x_s)$ by the planar surface  
$\G_0$ can be modelled by 
\ben\label{b1}
\left\{\begin{array}{lll}
         \Delta_x {\mathbb G}(x, x_s; \Gamma_0)+\kappa_0^2(x){\mathbb G}(x, x_s; \Gamma_0)=-\delta(x-x_s) & \textrm{in}\;\R^2 \\ [2mm]
         \ds\lim_{r\rightarrow \infty}\sqrt{r}\left(\frac{\partial {\mathbb G}^{s}(x, x_s; \Gamma_0)}{\partial r}-{\rm i}\kappa_0{\mathbb G}^{s}(x, x_s; \Gamma_0)\right)=0& {\rm for}\; r=|x|
       \end{array}
\right.
\enn
where $\kappa_0>0$ is the wave number satisfying $\kappa_0(x):=\kappa_1>0$ for $x\in\R^2_+$ and $\kappa_0(x):=\kappa_2>0$ for $x\in\R^2_-$, and ${\mathbb G}^{s}(x,x_s;\Gamma_0)$ denotes the scattered field when the observation point $x$ and the source point $x_s$ belong to the same half-space, denotes the transmitted field when the observation point $x$ and the source point $x_s$ belong to the different half-space. For the relation between ${\mathbb G}$ and ${\mathbb G}^s$, we have 
\ben
{\mathbb G}(x,x_s;\Gamma_0)=\left\{\begin{array}{lll}
{\mathbb G}^{s}(x,x_s;\Gamma_0)+u^{\rm inc}(x,x_s)\quad{\rm for}\;\; x\in \R^2_+,\;\;x_s\in\R^2_+\;\;{\rm or}\;\; x\in \R^2_-,\;\;x_s\in\R^2_-,\\ [2mm]
{\mathbb G}^{s}(x,x_s;\Gamma_0)\qquad\qquad\qquad\;\;{\rm for}\;\; x\in \R^2_-,\;\;x_s\in\R^2_+\;\;{\rm or}\;\; x\in \R^2_+,\;\;x_s\in\R^2_-.\end{array}
\right.
\enn

Now we are in position to deduce the formulation of ${\mathbb G}^{s}(x,x_s;\Gamma_0)$.  It follows from the Fourier transform \cite{L10} that   
\ben
{\mathbb G}^s(x,x_s;\Gamma_0)=\left\{\begin{aligned}
\frac{\rm i}{4\pi}\int_{-\infty}^{+\infty}\frac{1}{\beta_1}\frac{\beta_1-\beta_2}{\beta_1+\beta_2}e^{{\rm i}\beta_1(x_2+x_{s2})}e^{{\rm i}\xi (x_1-x_{s1})}d\xi\quad\;\; {\rm for}\;\;x\in \R^2_+,\;\;x_s\in\R^2_+,\\[2mm]
\frac{\rm i}{2\pi}\int_{-\infty}^{+\infty}\frac{1}{\beta_1+\beta_2}e^{{\rm i}(\beta_1x_{s2}-\beta_2x_2)}e^{{\rm i}\xi (x_1-x_{s1})}d\xi\quad \;\;\;{\rm for}\;\;x\in \R^2_-,\;\;x_s\in\R^2_+,\\[2mm]
\frac{\rm i}{2\pi}\int_{-\infty}^{+\infty}\frac{1}{\beta_1+\beta_2}e^{{\rm i}(\beta_1x_2-\beta_2x_{s2})}e^{{\rm i}\xi (x_1-x_{s1})}d\xi\quad \;\;\;{\rm for}\;\;x\in \R^2_+,\;\;x_s\in\R^2_-,\\[2mm]
\frac{\rm i}{4\pi}\int_{-\infty}^{+\infty}\frac{1}{\beta_2}\frac{\beta_2-\beta_1}{\beta_1+\beta_2}e^{-{\rm i}\beta_2(x_2+x_{s2})}e^{{\rm i}\xi (x_1-x_{s1})}d\xi\quad {\rm for}\;\;x\in \R^2_-,\;\;x_s\in\R^2_-,
\end{aligned}
\right.
\enn
where $\beta_1$, $\beta_2$ are defined by 
\ben
\beta_j=\left\{\begin{array}{l}
                 \sqrt{\kappa_j^2-\xi^2}\quad\; {\rm for}\quad |\kappa_j|>|\xi|,\\
                 {\rm i}\sqrt{\xi^2-\kappa_j^2}\quad {\rm for}\quad |\kappa_j|<|\xi|,
               \end{array}
\right.
\enn
for $j=1,2$. 

To establish the uniqueness results for inverse problems, we need to consider the scattering of a hyper-singular point source $\partial_{x_\ell}\Phi_{\kappa}(x, x_s)$ for $\ell=1,2$ and $\kappa=\kappa_1,\kappa_2$,  more precisely, $u^{\rm inc}(x,x_s)=\partial_{x_\ell}\Phi_{\kappa_1}(x,x_s)$ for $x_s\in \R^2_+$ and $u^{\rm inc}(x,x_s)=\partial_{x_\ell}\Phi_{\kappa_2}(x,x_s)$ for $x_s\in \R^2_-$. For this case, the scattering of the incident wave $u^{\rm inc}(x,x_s)$ by $\Gamma_0$ can be formulated by 
\be\label{b2}
\left\{\begin{array}{lll}
         \Delta_x {\mathbb U}(x, x_s; \Gamma_0)+\kappa_0^2(x){\mathbb U}(x, x_s; \Gamma_0)=-\partial_{x_\ell}\delta(x-x_s) & \textrm{in}\;\R^2 \\ [2mm]
        \ds\lim_{r\rightarrow \infty}\sqrt{r}\left(\frac{\partial {\mathbb U}^{s}(x, x_s; \Gamma_0)}{\partial r}-{\rm i}\kappa_0{\mathbb U}^{s}(x, x_s; \Gamma_0)\right)=0& {\rm for}\; r=|x|.
       \end{array}
\right.
\en
Similarly, the total field ${\mathbb U}(x, x_s; \Gamma_0)$ can be decomposed as 
\ben
{\mathbb U}(x,x_s;\Gamma_0)=\left\{\begin{array}{lll}
{\mathbb U}^{s}(x,x_s;\Gamma_0)+u^{\rm inc}(x,x_s)\quad{\rm for}\;\; x\in \R^2_+,\;\;x_s\in\R^2_+\;\;{\rm or}\;\; x\in \R^2_-,\;\;x_s\in\R^2_-,\\ [2mm]
{\mathbb U}^{s}(x,x_s;\Gamma_0)\qquad\qquad\qquad\;\;{\rm for}\;\; x\in \R^2_-,\;\;x_s\in\R^2_+\;\;{\rm or}\;\; x\in \R^2_+,\;\;x_s\in\R^2_-.\end{array}
\right.
\enn
By means of the Fourier transform, we have: for $\ell=1$
\ben
{\mathbb U}^s(x,x_s;\Gamma_0)=\left\{\begin{aligned}
-\frac{1}{4\pi}\int_{-\infty}^{+\infty}\frac{1}{\beta_1}\frac{\beta_1-\beta_2}{\beta_1+\beta_2}\xi e^{{\rm i}\beta_1(x_2+x_{s2})}e^{{\rm i}\xi (x_1-x_{s1})}d\xi\quad\;\; {\rm for}\;\;x\in \R^2_+,\;\;x_s\in\R^2_+,\\[2mm]
-\frac{1}{2\pi}\int_{-\infty}^{+\infty}\frac{1}{\beta_1+\beta_2}\xi e^{{\rm i}(\beta_1x_{s2}-\beta_2x_2)}e^{{\rm i}\xi (x_1-x_{s1})}d\xi\quad \;\;\;{\rm for}\;\;x\in \R^2_-,\;\;x_s\in\R^2_+,\\[2mm]
-\frac{1}{2\pi}\int_{-\infty}^{+\infty}\frac{1}{\beta_1+\beta_2}\xi e^{{\rm i}(\beta_1x_2-\beta_2x_{s2})}e^{{\rm i}\xi (x_1-x_{s1})}d\xi\quad \;\;\;{\rm for}\;\;x\in \R^2_+,\;\;x_s\in\R^2_-,\\[2mm]
-\frac{1}{4\pi}\int_{-\infty}^{+\infty}\frac{1}{\beta_2}\frac{\beta_2-\beta_1}{\beta_2+\beta_1}\xi e^{-{\rm i}\beta_2(x_2+x_{s2})}e^{{\rm i}\xi (x_1-x_{s1})}d\xi\quad {\rm for}\;\;x\in \R^2_-,\;\;x_s\in\R^2_-,
\end{aligned}
\right.
\enn
and for $\ell=2$
\ben
{\mathbb U}^s(x,x_s;\Gamma_0)=\left\{\begin{aligned}
\frac{1}{4\pi}\int_{-\infty}^{+\infty}\frac{\beta_1-\beta_2}{\beta_1+\beta_2} e^{{\rm i}\beta_1(x_2+x_{s2})}e^{{\rm i}\xi (x_1-x_{s1})}d\xi\qquad\;\;\; {\rm for}\;\;x\in \R^2_+,\;\;x_s\in\R^2_+,\\[2mm]
\frac{1}{2\pi}\int_{-\infty}^{+\infty}\frac{\beta_1}{\beta_1+\beta_2}e^{{\rm i}(\beta_1x_{s2}-\beta_2x_2)}e^{{\rm i}\xi (x_1-x_{s1})}d\xi\quad \;\;\;\;{\rm for}\;\;x\in \R^2_-,\;\;x_s\in\R^2_+,\\[2mm]
-\frac{1}{2\pi}\int_{-\infty}^{+\infty}\frac{\beta_2}{\beta_1+\beta_2}e^{{\rm i}(\beta_1x_2-\beta_2x_{s2})}e^{{\rm i}\xi (x_1-x_{s1})}d\xi\quad \;{\rm for}\;\;x\in \R^2_+,\;\;x_s\in\R^2_-,\\[2mm]
-\frac{1}{4\pi}\int_{-\infty}^{+\infty}\frac{\beta_2-\beta_1}{\beta_2+\beta_1} e^{-{\rm i}\beta_2(x_2+x_{s2})}e^{{\rm i}\xi (x_1-x_{s1})}d\xi\quad\;\; {\rm for}\;\;x\in \R^2_-,\;\;x_s\in\R^2_-.
\end{aligned}
\right.
\enn
Using the dominated convergence theorem to ${\mathbb G}^s(x,x_s;\Gamma_0)$, ${\mathbb U}^s(x,x_s;\Gamma_0)$ and their derivatives shows that ${\mathbb G}^s(x,x_s;\Gamma_0)\in C^{\infty}(\R^2\se\{x_s\})$ and ${\mathbb U}^s(x,x_s;\Gamma_0)\in C^{\infty}(\R^2\se\{x_s\})$.

\section{The well-posedness in $L^p$ for $1<p<2$}
\setcounter{equation}{0}

This section is devoted to the well-posedness of the direct scattering problem (\ref{a1}) and (\ref{a2}). The uniqueness can be first obtained by a direct application of Proposition 2.1 in \cite{CH98}. 
To show the existence, a special case of $D=\emptyset$ in \eqref{a1} or $n(x)\equiv 1$ in \eqref{a2} will be first considered, which is equivalent to a Lippmann-Schwinger integral equation defined in a bounded domain by introducing a special interface. Based on this result, we then investigate the general case where $D\neq \emptyset$ or $n(x)\not\equiv 1$, which exists a solution by utilizing the integral equation method. In this way, we are able to construct a $L^p(1<p<2)$ estimate of the solution to the problem, which will play a central role in the proof of the global uniqueness result for the inverse problem in the next section.

\begin{theorem}\label{thm3.1}
{\rm The direct scattering problem (\ref{a1}) or (\ref{a2}) has at most one solution.}
\end{theorem}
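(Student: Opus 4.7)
By linearity it suffices to show that if the incident field vanishes then the total field $u$ is identically zero. My plan is to run the classical energy/Rellich argument in a large truncated domain and then to invoke the Rellich-type lemma for layered media (Proposition~2.1 of \cite{CH98}) that the authors already refer to.

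First I would fix $R>0$ large enough that the ball $B_R$ contains $\mathrm{supp}\,f$, the obstacle $\ov D$, and the support of $n-1$ when relevant, so that outside $B_R$ the geometry reduces to the planar interface $\G_0$ separating two homogeneous half-spaces. I would then apply Green's first identity to the pair $(u,\ov u)$ on $B_R\cap\Om_1$ and $B_R\cap(\Om_2\se\ov D)$ in the impenetrable setting (\ref{a1}), or on $B_R$ itself in the penetrable setting (\ref{a2}), where $u\in H^1_{\mathrm{loc}}(\R^2)$. Continuity of $u$ and $\pa_\nu u$ across $\G\cap B_R$ makes the two integrals over $\G$ cancel, so only the contributions on $\pa B_R$ and (in the impenetrable case) on $\pa D$ remain.

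Next I would take the imaginary part. Since $\kappa_1,\kappa_2\in\R$, the volume integrand $|\na u|^2-\kappa^2|u|^2$ is real; the only nonpositive imaginary contribution is $-\kappa_2^2\int_D\I(n)|u|^2\,dx$ in the penetrable case (because $\I(n)\geq 0$), together with $-\int_{\pa D}\lambda|u|^2\,ds$ in the impedance case (because $\lambda\geq 0$), while the Dirichlet and Neumann boundary conditions produce no $\pa D$-contribution at all. Hence in every case
\ben
\I\int_{\pa B_R\cap(\Om_1\cup\Om_2)}\ov u\,\pa_r u\,ds\;\leq\;0.
\enn
Combining this with the Sommerfeld radiation condition, applied \emph{separately} on $\pa B_R\cap\Om_1$ and on $\pa B_R\cap\Om_2$, would give
\ben
\lim_{R\to\infty}\int_{\pa B_R\cap\Om_j}|u|^2\,ds = 0, \qquad j=1,2,
\enn
along with $u\equiv 0$ on $\pa D$ in the impedance case and on $D$ when $\I(n)>0$ there.

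Finally I would feed this far-field $L^2$-decay into Proposition~2.1 of \cite{CH98} to conclude that $u\equiv 0$ outside $B_R$ in both half-spaces, and then use unique continuation across the smooth curve $\G$, through $\Om_2\se\ov D$, and (in the penetrable case) through $D$ to propagate $u\equiv 0$ everywhere; together with the prescribed boundary or transmission condition on $\pa D$, this finishes uniqueness.

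The main obstacle I anticipate is entirely concentrated in this last step: because the planar interface $\G_0$ persists outside $B_R$, the usual Rellich lemma for a single Helmholtz equation on $\R^2$ does not apply, and one needs the two-layered Rellich result of \cite{CH98}. The real work is therefore to verify that $u$ meets all of its hypotheses — it solves the Helmholtz equation with the constants $\kappa_1,\kappa_2$ on the respective half-spaces outside $B_R$, obeys the planar transmission conditions on $\G_0\se B_R$, and satisfies the Sommerfeld condition in each half-space — so that the cited proposition can be applied verbatim.
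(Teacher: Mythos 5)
Your proposal is correct and follows essentially the same route as the paper: apply Green's first identity with $u^{\rm inc}=0$, take imaginary parts using the radiation condition together with the sign conditions on $\lambda$ and $n$ to obtain the decay of $\int_{\pa B_r}\bigl(|\pa_\nu u|^2+|u|^2\bigr)\,ds$, and then invoke Proposition~2.1 of \cite{CH98} for the two-layered Rellich-type conclusion. The only cosmetic remark is that the paper's condition requires decay of both $|u|^2$ and $|\pa_\nu u|^2$ on the large circles, which your expansion of the radiation condition already yields even though you only recorded the $|u|^2$ part.
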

\begin{proof}
Let $u^{\rm inc}=0$, from the proposition 2.1 in \cite{CH98}, it is sufficient to prove 
\be\label{c1}
\lim_{r\rightarrow \infty}\int_{\pa B_r}\left(\left|\frac{\pa u}{\pa \nu}\right|^2+|u|^2\right)ds=0,
\en
where $B_r:=\{x\in\R^2: |x|<r\}$.
The equation (\ref{c1}) is a direct consequence of combination of the Sommerfeld radiation condition, the Green's first theorem with the boundary condition on $\pa D$ or the assumption ${\rm Re}(n)>0$.
\end{proof}

\subsection{The special case $D=\emptyset$ or $n(x)\equiv 1$}

In this subsection, we study the scattering of point sources only by a locally rough interface $\Gamma$, which means that no obstacles are embedded into the two-layered medium. Different from all existing works, e.g., \cite{SE10, MT06, LYZ13, DTS03, ZS03}, we will propose a novel technique to prove the existence of a unique solution by transforming the reduced model  into  an equivalent Lippmann-Schwinger type integral equation defined in a bounded domain. Then the well-posedness in $L^p$ space  will directly follows from  the classical Fredholm theory. 

We consider two classes of incident waves
\begin{eqnarray*}
u^{\rm inc}(x,x_s)=\left\{\begin{aligned}&\Phi_{\kappa_1}(x,x_s)\qquad\;&{\rm for}\;\; x_s\in\Omega_1\\
&\partial_{x_{\ell}}\Phi_{\kappa_1}(x,x_s)\;\;\;&{\rm for}\;\; x_s\in\Omega_1
\end{aligned}
\right.
\end{eqnarray*}
where $\ell=1,2$. Compared with the incident wave $\Phi_{\kappa_1}(x,x_s)$, it is easily seen that $\partial_{x_{\ell}}\Phi_{\kappa_1}(x,x_s)$ is a hyper-singular point source at $x_s$.
To indicate the dependence of the total field on $u^{\rm inc}$, we use ${\mathbb G}(x,x_s;\Gamma)$ and ${\mathbb U}(x,x_s;\Gamma)$ to denote the total field corresponding to the scattering of $\Phi_{\kappa_1}(x,x_s)$ and $\partial_{x_{\ell}}\Phi_{\kappa_1}(x,x_s)$, respectively, by the locally rough surface $\Gamma$. In the following, for simplicity, we only provide a detailed proof for the existence of ${\mathbb U}(x,x_s;\Gamma)$. The existence of ${\mathbb G}(x,x_s;\Gamma)$ can be obtained in a similar manner. 

Note the total field ${\mathbb U}(x,x_s;\Gamma)$ satisfies
\be\label{c2}
\left\{\begin{array}{lll}
         \Delta_x {\mathbb U}(x,x_s;\Gamma)+\kappa_{\Gamma}^2(x){\mathbb U}(x,x_s;\Gamma)=-\partial_{x_\ell}\delta(x-x_s) \;\;\qquad \textrm{in}\;\;\R^2 \\[2mm]
         \ds\lim_{r\rightarrow \infty}\sqrt{r}\left(\frac{\partial {\mathbb U}^s(x,x_s;\Gamma)}{\partial r}-{\rm i}\kappa_{\Gamma}{\mathbb U}^s(x,x_s;\Gamma)\right)=0\qquad\qquad {\rm for}\; r=|x|
       \end{array}
\right.
\en
where ${\mathbb U}^s(x,x_s;\Gamma)$ denotes the scattered field defined as ${\mathbb U}^s(x,x_s;\Gamma):={\mathbb U}(x,x_s;\Gamma)-\partial_{x_{\ell}}\Phi_{\kappa_1}(x,x_s)$ in $\Omega_1$ and ${\mathbb U}^s(x,x_s;\Gamma):={\mathbb U}(x,x_s;\Gamma)$ in $\Omega_2$ for the case $x_s\in\Omega_1$. Here, $\kappa_{\Gamma}$ denotes the wave number satisfying $\kappa_{\Gamma}=\kappa_1$ in $\Omega_1$ and $\kappa_{\Gamma}=\kappa_2$ in $\Omega_2$.

The uniqueness of Problem (\ref{c2}) is a direct consequence of  Theorem \ref{thm3.1}. To show the existence of Problem (\ref{c2}), we first consider the scattering of  $u^{\rm inc}(x,x_s)$ by a special interface
\begin{equation*}
\Gamma_{R}:=\{(x_1,x_2)\in\R^2: x_2=0\;\;{\rm for}\;\;|x_1|\geq R\;\;{\rm and}\;\; x_2=-\sqrt{R^2-x_1^2}\;\;{\rm for}\;\;|x_1|<R\}.
\end{equation*} 
For $u^{\rm inc}(x,x_s)=\partial_{x_{\ell}}\Phi_{\kappa_1}(x,x_s)$ with $x_s\in\Omega_{1,R}$, one aims to find the total field 
${\mathbb U}(x,x_s;\Gamma_R):=\partial_{x_{\ell}}\Phi_{\kappa_1}(x,x_s)+{\mathbb U}^s(x,x_s;\Gamma_R)$ in $\Omega_{1,R}$ and ${\mathbb U}(x,x_s;\Gamma_R):={\mathbb U}^{s}(x,x_s;\Gamma_R)$ in $\Omega_{2,R}$ which solves 
\be\label{c4}
\left\{\begin{array}{lll}
         \Delta_x {\mathbb U}(x,x_s;\Gamma_R)+\kappa_{R}^2(x){\mathbb U}(x,x_s;\Gamma_R)=-\partial_{x_\ell}\delta(x-x_s) \qquad \textrm{in}\;\;\R^2 \\[2mm]
         \ds\lim_{r\rightarrow \infty}\sqrt{r}\left(\frac{\partial {\mathbb U}^s(x,x_s;\Gamma_R)}{\partial r}-{\rm i}\kappa_{R}{\mathbb U}^s(x,x_s;\Gamma_R)\right)=0\qquad\quad\;\; {\rm for}\quad r=|x|.
       \end{array}
\right.
\en
Here, $\Omega_{1,R}$ and $\Omega_{2,R}$ denote the upper and lower half-spaces separated by the locally rough surface $\Gamma_{R}$, and $\kappa_{R}$ is the wave number satisfying $\kappa_R=\kappa_1$ in $\Omega_{1,R}$ and $\kappa_R=\kappa_2$ in $\Omega_{2,R}$.

Recall that ${\mathbb U}(x,x_s;\Gamma_0)$ is the total field of Problem (\ref{b2}). Define the difference 
\be\label{c3}
V(x,x_s):={\mathbb U}(x,x_s;\Gamma_R)-{\mathbb U}(x,x_s;\Gamma_0),
\en
which satisfies the following problem 
\be\label{c5}
\left\{\begin{array}{lll}
         \Delta_x V(x,x_s)+\kappa_0^2V(x,x_s)=\varphi_1(x) \quad\qquad\;\;\; \textrm{in}\;\;\R^2, \\[2mm]
         \ds\lim_{r\rightarrow \infty}\sqrt{r}\left(\frac{\partial V(x,x_s)}{\partial r}-{\rm i}\kappa_0V(x,x_s)\right)=0\quad {\rm for}\quad r=|x|
       \end{array}
\right.
\en
with the right term $\varphi_1$ given by 
\begin{eqnarray*}
\varphi_1(x):=\left\{\begin{aligned}&\eta {\mathbb U}(x,x_s;\Gamma_R)\qquad{\rm in}\;\; B_1\\
&0\qquad\qquad\qquad\quad\;{\rm in}\;\;\R^2\se{\ov{B_1}}.
\end{aligned}
\right.
\end{eqnarray*}
Here, $\eta:=\kappa_2^2-\kappa_1^2$ and 
$B_1:=\{(x_1,x_2)\in\R^2: -\sqrt{R^2-x_1^2}<x_2<0\;\;{\rm for}\;\;|x_1|<R\}.$
\begin{theorem}\label{thm1}
Let $u^{\rm inc}(x, x_s)=\partial_{x_{\ell}}\Phi_{\kappa_1}(x,x_s)\in L^{p}_{\rm loc}(\R^2)$ for $1<p<2$ and $x_s\in \R^2_+$. If ${\mathbb U}^s(x,x_s;\Gamma_R)\in W^{2,p}_{\rm loc}(\R^2)$ is the scattered field associated with Problem (\ref{c4}), then ${\mathbb U}(x,x_s;\Gamma_R)|_{B_1}:=({\mathbb U}^s(x,x_s;\Gamma_R)+u^{\rm inc}(x,x_s))|_{B_1}$ is a solution to the following Lippmann-Schwinger equation 
\begin{equation}\label{c6} 
{\mathbb U}(x,x_s;\Gamma_R)+\eta\int_{B_1}{\mathbb G}(x,y,\Gamma_0){\mathbb U}(y,x_s;\Gamma_R)dy={\mathbb U}(x,x_s;\Gamma_0),\quad x\in B_1.
\end{equation}
Conversely, if ${\mathbb U}(x,x_s;\Gamma_R)|_{B_1}\in L^{p}(B_1)$ is a solution to the Lippmann-Schwinger equation (\ref{c6}), then ${\mathbb U}^s(x,x_s;\Gamma_R):={\mathbb U}(x,x_s;\Gamma_R)-u^{\rm inc}(x,x_s)$ can be extended to a solution to Problem (\ref{c4}) such that ${\mathbb U}^s(x,x_s;\Gamma_R)\in W_{\rm loc}^{2,p}(\R^2).$
\end{theorem}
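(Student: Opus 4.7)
The plan is to establish the two directions by exploiting the fact that the interfaces $\Gamma_R$ and $\Gamma_0$ differ only inside $B_1$: on $B_1$ one has $\kappa_R=\kappa_1$ (upper side of $\Gamma_R$) while $\kappa_0=\kappa_2$ (lower side of $\Gamma_0$), so $\kappa_0^2-\kappa_R^2=\eta$ on $B_1$ and $\kappa_0^2-\kappa_R^2=0$ on $\R^2\se\ov{B_1}$. Thus the difference $V(x,x_s)$ defined in (\ref{c3}) is governed by an inhomogeneous equation supported on the "bump" $B_1$, and the background Green's function ${\mathbb G}(\cdot,\cdot;\Gamma_0)$ is the natural tool for representing $V$.

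For the forward direction, I would subtract the Helmholtz equations satisfied by ${\mathbb U}(x,x_s;\Gamma_R)$ and ${\mathbb U}(x,x_s;\Gamma_0)$. The hyper-singular source $-\partial_{x_\ell}\delta(x-x_s)$ is common to both, hence cancels, and the identity $\Delta V+\kappa_0^2V=(\kappa_0^2-\kappa_R^2){\mathbb U}(x,x_s;\Gamma_R)=\varphi_1(x)$ drops out, with $\varphi_1$ compactly supported in $B_1$. Since ${\mathbb U}^s(\cdot,x_s;\Gamma_R)\in W^{2,p}_{\rm loc}(\R^2)$ and $u^{\rm inc}\in L^p_{\rm loc}(\R^2)$ for $1<p<2$, one has $\varphi_1\in L^p(B_1)$; combined with the Sommerfeld condition on both ${\mathbb U}^s(\cdot,x_s;\Gamma_R)$ and ${\mathbb U}^s(\cdot,x_s;\Gamma_0)$, the function $V$ is a radiating $L^p_{\rm loc}$ solution of (\ref{c5}). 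The standard representation for radiating solutions to the layered Helmholtz equation then gives
\ben
V(x,x_s)=-\int_{\R^2}\varphi_1(y){\mathbb G}(x,y;\Gamma_0)\,dy=-\eta\int_{B_1}{\mathbb G}(x,y;\Gamma_0){\mathbb U}(y,x_s;\Gamma_R)\,dy,
\enn
and substituting this back into (\ref{c3}) and restricting to $x\in B_1$ yields (\ref{c6}).

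For the converse, starting from a solution ${\mathbb U}(\cdot,x_s;\Gamma_R)|_{B_1}\in L^p(B_1)$ of (\ref{c6}), I would use the equation itself to \emph{define} the extension
\ben
{\mathbb U}(x,x_s;\Gamma_R):={\mathbb U}(x,x_s;\Gamma_0)-\eta\int_{B_1}{\mathbb G}(x,y;\Gamma_0){\mathbb U}(y,x_s;\Gamma_R)\,dy,\quad x\in\R^2,
\enn
and set ${\mathbb U}^s(x,x_s;\Gamma_R):={\mathbb U}(x,x_s;\Gamma_R)-u^{\rm inc}(x,x_s)$. The $W^{2,p}_{\rm loc}(\R^2)$-regularity of ${\mathbb U}^s$ follows from the mapping property that the volume potential $u\mapsto\int_{B_1}{\mathbb G}(\cdot,y;\Gamma_0)u(y)\,dy$ is bounded from $L^p(B_1)$ into $W^{2,p}_{\rm loc}(\R^2)$; this is the classical Calderón--Zygmund $L^p$ theory applied to the layered fundamental solution, decomposed as free-space fundamental solution plus the smooth reflected/transmitted kernel ${\mathbb G}^s(\cdot,\cdot;\Gamma_0)\in C^{\infty}(\R^2\se\{x_s\})$ identified at the end of Section 2. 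Applying $\Delta+\kappa_0^2$ to the extension formula and using $\Delta_x{\mathbb G}(x,y;\Gamma_0)+\kappa_0^2(x){\mathbb G}(x,y;\Gamma_0)=-\delta(x-y)$ recovers the PDE in (\ref{c4}) on both $\R^2\se\ov{B_1}$ (where $\kappa_R=\kappa_0$) and inside $B_1$ (where the extra $\eta\chi_{B_1}$ term shifts $\kappa_0^2$ to $\kappa_R^2=\kappa_1^2$); the radiation condition for ${\mathbb U}^s(\cdot,x_s;\Gamma_R)$ is inherited from that of ${\mathbb G}^s(\cdot,y;\Gamma_0)$ and the compact support of the density in $y$.

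The main obstacle I anticipate is justifying the $W^{2,p}_{\rm loc}$ regularity of the volume potential across the flat interface $\Gamma_0\cap\ov{B_1}$, where the coefficient $\kappa_0^2$ is discontinuous. One must be careful that the usual $L^p$-to-$W^{2,p}$ estimate for Newtonian-type potentials still applies to ${\mathbb G}(x,y;\Gamma_0)$; this is handled by splitting ${\mathbb G}={\mathbb G}^s+\Phi_{\kappa_0}$ on each half-plane, using the standard estimates for the free-space part, and exploiting the global smoothness of ${\mathbb G}^s$ (away from the diagonal) for the reflected contribution. Once that mapping property is in hand, the equivalence and the $L^p$-regularity claim follow smoothly from the representation formula.
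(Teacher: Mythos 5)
Your overall strategy coincides with the paper's: both directions hinge on the observation that $V={\mathbb U}(\cdot,x_s;\Gamma_R)-{\mathbb U}(\cdot,x_s;\Gamma_0)$ is a radiating solution of \eqref{c5} with source $\varphi_1=\eta\chi_{B_1}{\mathbb U}(\cdot,x_s;\Gamma_R)$ supported on the bump $B_1$, and your converse direction is essentially identical to the paper's: define the extension by \eqref{c18}, get $W^{2,p}_{\rm loc}$ regularity from the volume-potential mapping properties (splitting ${\mathbb G}=\Phi_{\kappa_0}+{\mathbb G}^s$ with ${\mathbb G}^s$ smooth off the diagonal), verify the PDE piecewise in $\R^2_+$, $B_1$ and $\Omega_{2,R}$, and inherit the radiation condition from ${\mathbb G}$. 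That half is fine.

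The gap is in the forward direction. You write that ``the standard representation for radiating solutions to the layered Helmholtz equation then gives'' $V(x,x_s)=-\int_{\R^2}\varphi_1(y){\mathbb G}(x,y;\Gamma_0)\,dy$. That representation formula is not an off-the-shelf fact here; it is precisely what the paper's proof of this theorem is devoted to establishing. The difficulty is that in a two-layered medium the Sommerfeld condition is imposed with different wave numbers $\kappa_1,\kappa_2$ on the upper and lower semicircles, and neither $V$ nor ${\mathbb G}(\cdot,x;\Gamma_0)$ is known a priori to decay uniformly in all directions (the behaviour along the interface is delicate), so the usual free-space argument that the boundary integral over $\partial B_{R'}$ vanishes does not apply directly. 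The paper handles this by applying Green's second identity in $B_1\setminus B_\varepsilon(x)$ and in $B_{R'}^{\pm}$, and then proving the Rellich-type a priori bounds \eqref{c13}, namely $\int_{\partial B_{R'}}|V|^2\,ds=O(1)$ and $\int_{\partial B_{R'}}|{\mathbb G}|^2\,ds=O(1)$, by inserting the first Green identity \eqref{c16} into the radiation conditions \eqref{c14}--\eqref{c15}; only then do Cauchy--Schwarz and the radiation condition kill the far-field contribution $I_1-I_2$. Your proposal needs either this argument or a precise citation of a representation theorem valid for the two-layered Green's function; as written, the key analytic step of the forward direction is assumed rather than proved. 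A second, smaller omission: Green's identity naturally produces the kernel ${\mathbb G}(y,x;\Gamma_0)$, and one must invoke the reciprocity relation ${\mathbb G}(y,x;\Gamma_0)={\mathbb G}(x,y;\Gamma_0)$ (verified in the paper from the explicit Fourier formulas) to arrive at \eqref{c6}; your formula uses this silently. The obstacle you flag as the main one, the $W^{2,p}$ estimate across $\Gamma_0$, is comparatively routine and is handled exactly as you describe.
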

\begin{proof}
Let $x\in B_1$ be an arbitrary point, we choose a sufficient small $\varepsilon>0$ such that the ball $B_{\varepsilon}(x)$ with $x$ as the centre and $\varepsilon$ as the radius contains in the domain $B_1$. If ${\mathbb U}^s(x,x_s;\Gamma_R)$ is the scattered field associated to Problem (\ref{c4}), applying the second Green's theorem to $V(y,x_s)$ and ${\mathbb G}(y,x;\Gamma_0)$ in the domain $B_1\se B_{\varepsilon}(x)$ yields
\be\no
&&\int_{B_1\setminus B_\varepsilon (x)}\left(\Delta V(y,x_s){\mathbb G}(y,x;\Gamma_0)-\Delta {\mathbb G}(y,x;\Gamma_0)V(y,x_s)\right)dy\\\no
&&=\left\{\int_{\G_0\setminus\G_R}-\int_{\G_R\setminus\G_0}-\int_{\partial B_\varepsilon (x)}\right\}
\left(\frac{\partial V(y,x_s)}{\pa \nu(y)}{\mathbb G}(y,x;\Gamma_0)-\frac{\partial {\mathbb G}(y,x;\Gamma_0)}{\pa \nu(y)}V(y,x_s)\right)ds(y)\\\label{c7}
&&:=I_1-I_2-I_3
\en
where $\nu(y)$ denotes the upward unit normal vector when $y\in\Gamma_0$ or $y\in\Gamma_R$, and denotes the outward unit normal vector when $y\in \partial B_{\varepsilon}(x)$. Employing (\ref{b2}) and (\ref{c5}) implies that the left hand side of (\ref{c7}) will trend to 
\begin{equation}\label{c8}
\eta\int_{B_1}{\mathbb G}(y,x;\Gamma_0){\mathbb U}(y,x_s;\Gamma_R)dy
\end{equation}
as $\varepsilon\to 0$. Substituting ${\mathbb G}(y,x;\Gamma_0)=\Phi_{\kappa_2}(y,x)+{\mathbb G}^s(y,x;\Gamma_0)$ into the term $I_3$ gives that $I_3=I_{31}+I_{32}$ with definitions
\ben
&&I_{31}:=\int_{\partial B_\varepsilon (x)}\left(\frac{\pa V(y,x_s)}{\pa \nu(y)}\Phi_{\kappa_2}(y,x)-\frac{\pa \Phi_{\kappa_2}(y,x)}{\pa \nu(y)}V(y,x_s)\right)ds(y),\\
&&I_{32}:=\int_{\partial B_\varepsilon (x)}\left(\frac{\pa V(y,x_s)}{\pa \nu(y)}{\mathbb G}^s(y,x;\Gamma_0)-\frac{\pa {\mathbb G}^s(y,x;\Gamma_0)}{\pa \nu(y)}V(y,x_s)\right)ds(y).
\enn
A direct manipulation, using the mean value theorem, shows that 
\be\label{c9}
\lim_{\varepsilon\to 0} I_{31}=V(x, x_s).
\en
A straightforward derivation using the equations that $V(y,x_s)$ and ${\mathbb G}^s(y,x;\Gamma_0)$ satisfy gives that 
\be\label{c10}
I_{32}=\eta\int_{B_{\varepsilon}(x)}{\mathbb G}^s(y,x;\Gamma_0){\mathbb U}(y,x_s;\Gamma_R)dy\to 0\quad{\rm as}\quad \varepsilon\to 0,
\en
where we use the smooth of ${\mathbb U}(y,x_s;\Gamma_R)$ and ${\mathbb G}^s(y,x;\Gamma_0)$ for $x\in B_1$, $y\in B_{\varepsilon}(x)$ and $x_s\in \R^2_+$. Hence, (\ref{c9}) and (\ref{c10}) yield
\be\label{c11}
\lim_{\varepsilon\to 0} I_{3}=V(x, x_s).
\en
For $I_1$ and $I_2$, let $B_{R'}$ be the circle of radius $R'>R$ and center at the origin, and let $B^{\pm}_{R'}$ denote the upper and lower semi-circle, respectively. Application of the second Green's theorem for $G$ and ${\mathbb G}_0$ in the domain $B_{R'}^+$ and $B^-_{R'}\se B_1$ leads to 
 \ben\label{c12}
 I_1-I_2&=&\int_{\partial B_{R'}}
\bigg[\left(\frac{\pa V(y,x_s)}{\pa \nu(y)}-{\rm i}\kappa_0V(y,x_s)\right){\mathbb G}(y,x;\Gamma_0)\\
&&\qquad-\left(\frac{\pa {\mathbb G}(y,x;\Gamma_0)}{\pa \nu(y)}-{\rm i}\kappa_0{\mathbb G}(y,x;\Gamma_0)\right)V(y,x_s)\bigg]ds(y)
\enn
where $\nu(y)$ stands for the outward unit normal vector when $y\in \pa B_{R'}$. To prove (\ref{c6}), we first show that 
\be\label{c13}
\int_{\partial B_{R'}}|V(y,x_s)|^2ds(y)=O(1),\quad \int_{\partial B_{R'}}|{\mathbb G}(y,x;\Gamma_0)|^2ds(y)=O(1),\quad R'\rightarrow\infty.
\en
It follows from the Sommerfeld radiation condition that 
\be\label{c14}
\int_{\partial B_{R'}^+}\left[\left|\frac{\pa V(y,x_s)}{\pa \nu(y)}\right|^2+\kappa_1^2|V(y,x_s)|^2+2\kappa_1{\rm Im}\left(\frac{\pa\overline{V(y,x_s)}}{\pa \nu(y)}V(y,x_s)\right) \right]ds(y)\to 0,
\en
\be\label{c15}
\int_{\partial B_{R'}^-}\left[\frac{\kappa_1}{\kappa_2}\left|\frac{\pa V(y,x_s)}{\pa \nu(y)}\right|^2+\kappa_1\kappa_2|V(y,x_s)|^2+2\kappa_1{\rm Im}\left(\frac{\pa\overline{V(y,x_s)}}{\pa \nu(y)}V(y,x_s)\right) \right]ds(y)\to 0,
\en
as $R'\rightarrow\infty$. 
Applying the Green's first theorem for $V$ and $\overline{V}$ in $B_{R'}^+$ and $B_{R'}^-\se\ov{B_1}$ implies
\be\no
&&\int_{\partial B_{R'}}\frac{\pa\overline{V(y,x_s)}}{\pa \nu(y)}V(y,x_s) ds(y)
=\left\{\int_{\G_0\setminus\G_R}-\int_{\G_R\setminus\G_0}\right\}\frac{\pa\overline{V(y,x_s)}}{\pa \nu(y)}V(y,x_s)ds(y)\\\label{c16}
&&+\int_{B_{R'}\se\ov{B_1}}\left(|\nabla V(y,x_s)|^2-\kappa_0^2|V(y,x_s)|^2\right)dy
\en
We now insert the imaginary part of (\ref{c16}) into (\ref{c14})-(\ref{c15}) and find that
\ben
&&\int_{\partial B_{R'}^+}\left(\left|\frac{\pa V(y,x_s)}{\pa \nu(y)}\right|^2+\kappa_1^2|V(y,x_s)|^2 \right)ds(y)\\
&&+\int_{\partial B_{R'}^-}\left(\frac{\kappa_1}{\kappa_2}\left|\frac{\pa V(y,x_s)}{\pa \nu(y)}\right|^2+\kappa_1\kappa_2|V(y,x_s)|^2\right)ds(y) \\
&&\rightarrow 2\kappa_1{\rm Im}\left\{\int_{\G_R\setminus\G_0}-\int_{\G_0\setminus\G_R}\right\}\frac{\pa\overline{V(y,x_s)}}{\pa \nu(y)} V(y,x_s)ds(y),\quad R'\to\infty,
\enn
which implies the first part in (\ref{c13}) holds. Similarly, we can prove that the second part in  (\ref{c13}) holds.
Thus, it follows from the Cauchy-Schwarz inequality using the Sommerfeld radiation condition and (\ref{c13}) that 
$I_1-I_2=0$. Hence, using (\ref{c7}), (\ref{c8}), (\ref{c11}) and the definition of $V(x, x_s):={\mathbb U}(x,x_s;\Gamma_R)-{\mathbb U}(x,x_s;\Gamma_0)$ shows that ${\mathbb U}(x,x_s;\Gamma_R)|_{B_1}$ is a solution 
\begin{equation*}\label{c17} 
{\mathbb U}(x,x_s;\Gamma_R)+\eta\int_{B_1}{\mathbb G}(y,x;\Gamma_0){\mathbb U}(y,x_s;\Gamma_R)dy={\mathbb U}(x,x_s;\Gamma_0)\quad x\in B_1.
\end{equation*}
It is easily verified from the formulation of ${\mathbb G}(\cdot,\cdot,\Gamma_0)$ that the reciprocity relation ${\mathbb G}(y,x;\Gamma_0)={\mathbb G}(x,y;\Gamma_0)$ holds for $x,y\in B_1, x\not=y$, thus, we conclude that ${\mathbb U}(x,x_s;\Gamma_R)|_{B_1}$ is a solution to (\ref{c6}).

Conversely, let ${\mathbb U}(x,x_s;\Gamma_R)\in L^p(B_1)$ be a solution of (\ref{c6}) and define ${\mathbb U}(x,x_s;\Gamma_R)$ by
\begin{equation}\label{c18} 
{\mathbb U}(x,x_s;\Gamma_R):={\mathbb U}(x,x_s;\Gamma_0)-\eta\int_{B_1}{\mathbb G}(x,y;\Gamma_0){\mathbb U}(y,x_s;\Gamma_R)dy\quad x\in \R^2
\end{equation}
which gives that the scattered field ${\mathbb U}^s(x,x_s;\Gamma_R):={\mathbb U}(x,x_s;\Gamma_R)-u^{\rm inc}(x,x_s)$ in $\Omega_{1,R}$ and ${\mathbb U}^s(x,x_s;\Gamma_R):={\mathbb U}(x,x_s;\Gamma_R)$ in $\Omega_{2,R}$ belongs to $W_{\rm loc}^{2,p}(\R^2)$ from the smoothness of ${\mathbb G}(x, x_s;\Gamma_0)$ for $x\not=x_s$ and \cite{GT83}. In $\R^2_+$, it is easy to see $\Delta_x {\mathbb U}(x,x_s;\Gamma_R)+\kappa_1^2{\mathbb U}(x,x_s;\Gamma_R)=-\partial_{x_\ell}\delta(x-x_s)$ in $\R^2_+$; in the domain $B_1$, we have $\Delta_x {\mathbb U}(x,x_s;\Gamma_R) +\kappa_2^2{\mathbb U}(x,x_s;\Gamma_R)=\eta {\mathbb U}(x,x_s;\Gamma_R)=(\kappa_2^2-\kappa_1^2){\mathbb U}(x,x_s;\Gamma_R)$ which gives $\Delta_x {\mathbb U}(x,x_s;\Gamma_R)+\kappa_1^2{\mathbb U}(x,x_s;\Gamma_R)=0$ in $B_1$; and in the domain $\Omega_{2,R}$, we conclude $\Delta_x {\mathbb U}(x,x_s;\Gamma_R)+\kappa_2^2{\mathbb U}(x,x_s;\Gamma_R)=0$; thus, we have ${\mathbb U}(x,x_s;\Gamma_R)$ satisfies the equation $\Delta_x {\mathbb U}(x,x_s;\Gamma_R)+\kappa_R^2{\mathbb U}(x,x_s;\Gamma_R)=-\partial_{x_\ell}\delta(x-x_s)$ in $\R^2$. Since ${\mathbb G}(x,y;\Gamma_0)$ satisfies the Sommerfeld radiation condition, we conclude that ${\mathbb U}^s(x,x_s;\Gamma_R)$ also satisfies the Sommerfeld radiation condition. So ${\mathbb U}(x,x_s;\Gamma_R)$ given by (\ref{c18}) is a solution to Problem (\ref{c2}). The proof is finished.
\end{proof}

With the equivalence theorem \ref{thm1}, we can transform the well-posedness of Problem (\ref{c4}) into
solving the Lippmann-Schwinger type equation (\ref{c6}) in $L^p(B_1)$. To this end, we define the integral operator
$T_0: L^p(B_1)\rightarrow  L^p(B_1)$ by
\ben
T_0\varphi(x):=\int_{B_1}{\mathbb G}(x,y;\Gamma_0)\varphi(y)dy,
\enn
thus, the equation (\ref{c6}) can be rewritten in the following operator form
\be\label{c19}
(I+\eta T_0){\mathbb U}(\cdot,x_s;\Gamma_R)= {\mathbb U}(\cdot,x_s;\Gamma_0) \quad {\rm in}\quad L^p(B_1).
\en
where $I:L^p(B_1)\rightarrow L^p(B_1)$ is the identity operator. Now we are able to
obtain the following existence result for the  Lippmann-Schwinger type equation (\ref{c19}).
\begin{theorem}\label{thm2}
For $1< p < 2$, there exists a unique solution ${\mathbb U}(\cdot,x_s;\Gamma_R)\in L^p(B_1)$ to (\ref{c19}) such that 
\be\label{c20}
\|{\mathbb U}(\cdot,x_s;\Gamma_R)\|_{L^p(B_1)}\leq C_1\|{\mathbb U}(\cdot,x_s;\Gamma_0)\|_{L^p(B_1)}.
\en
\end{theorem}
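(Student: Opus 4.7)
The plan is to derive existence together with the estimate~(\ref{c20}) from the Riesz--Fredholm alternative on $L^p(B_1)$. This reduces to three ingredients: (i) compactness of $T_0$ on $L^p(B_1)$, (ii) injectivity of $I+\eta T_0$, and (iii) boundedness of the inverse via the open mapping theorem.

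For compactness, I would split the kernel on $B_1\times B_1$ as ${\mathbb G}(x,y;\Gamma_0) = \Phi_{\kappa_2}(x,y) + {\mathbb G}^s(x,y;\Gamma_0)$. The smooth part ${\mathbb G}^s$ lies in $C^\infty(\overline{B_1}\times\overline{B_1})$ by the regularity remark at the end of Section~2, so its integral operator is Hilbert--Schmidt and hence compact on every $L^p(B_1)$. The singular part $\Phi_{\kappa_2}(x,y) = \frac{\rm i}{4}H_0^{(1)}(\kappa_2|x-y|)$ has a leading logarithmic term plus a smooth remainder, so it suffices to show that $L\varphi(x):=\int_{B_1}\log|x-y|\varphi(y)\,dy$ is compact on $L^p(B_1)$ for $1<p<\infty$. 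I would combine the Calder\'on--Zygmund estimate $\|D^2 L\varphi\|_{L^p(\R^2)}\les\|\varphi\|_{L^p(B_1)}$ with Young's inequality on the log kernel to obtain $L:L^p(B_1)\to W^{2,p}(B_1)$ bounded, and then invoke the compact Rellich--Kondrachov embedding $W^{2,p}(B_1)\hookrightarrow L^p(B_1)$.

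To prove injectivity of $I+\eta T_0$ on $L^p(B_1)$, suppose $\varphi\in L^p(B_1)$ satisfies $\varphi+\eta T_0\varphi=0$, and extend via
\begin{equation*}
  \tilde u(x) := -\eta\int_{B_1}{\mathbb G}(x,y;\Gamma_0)\varphi(y)\,dy,\qquad x\in\R^2.
\end{equation*}
By the converse direction of Theorem~\ref{thm1}, $\tilde u|_{B_1}=\varphi$, $\tilde u\in W^{2,p}_{\rm loc}(\R^2)$, $\tilde u$ solves $\Delta\tilde u+\kappa_R^2\tilde u=0$ in $\R^2$ in the distributional sense (the density $\eta\varphi=(\kappa_2^2-\kappa_1^2)\tilde u$ on $B_1$ absorbs precisely the mismatch between the background wave number $\kappa_2$ and the physical $\kappa_1=\kappa_R|_{B_1}$, since $B_1\subset\Omega_{1,R}$), and $\tilde u$ obeys the Sommerfeld radiation condition because ${\mathbb G}(\cdot,y;\Gamma_0)$ does. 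Since $1<p<2$ in two dimensions gives $W^{2,p}_{\rm loc}\hookrightarrow H^1_{\rm loc}$ by Sobolev embedding, Theorem~\ref{thm3.1} applies with zero incident field and forces $\tilde u\equiv 0$, whence $\varphi=0$.

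Combining these two ingredients, the Fredholm alternative yields that $I+\eta T_0$ is a bijection of $L^p(B_1)$; the open mapping theorem gives a bounded inverse; applying it to ${\mathbb U}(\cdot,x_s;\Gamma_0)$ produces the unique solution of~(\ref{c19}) together with~(\ref{c20}) for $C_1:=\|(I+\eta T_0)^{-1}\|_{L^p(B_1)\to L^p(B_1)}$, independent of $x_s$. The delicate step is the compactness of $T_0$ on $L^p$ in the sub-$L^2$ range $1<p<2$: the logarithmic singularity is weaker than any positive power, and the cleanest route is the $W^{2,p}$-mapping property of the Newtonian potential (valid for all $1<p<\infty$) followed by Rellich--Kondrachov, rather than attempting to interpolate from a more familiar $L^2$ treatment.
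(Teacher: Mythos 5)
Your proposal is correct and takes essentially the same route as the paper: compactness of $T_0$ on $L^p(B_1)$ via its boundedness into $W^{2,p}(B_1)$ followed by the compact Sobolev embedding, injectivity of $I+\eta T_0$ by extending the homogeneous solution through the integral representation to a radiating solution of $\Delta\varphi+\kappa_R^2\varphi=0$ in $\R^2$ and invoking Theorem \ref{thm3.1}, and then the Riesz--Fredholm theory with a bounded inverse to obtain (\ref{c20}). The only difference is that you spell out the kernel splitting and the Calder\'on--Zygmund estimate behind the $L^p\to W^{2,p}$ mapping property, which the paper simply asserts.
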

\begin{proof}
By the fact that the operator $T_0: L^p(B_1)\rightarrow W^{2,p}(B_1)$ is bounded and the Sobolev compact embedding theorem, we conclude that $T_0: L^p(B_1)\rightarrow L^p(B_1)$ is compact. Hence, $I+\eta T_0:  L^p(B_1)\rightarrow  L^p(B_1)$ is a Fredholm operator. By the Riesz-Fredholm theory, the existence of a solution to (\ref{c19}) can be established from the uniqueness of (\ref{c19}). Let $(I+\eta T_0)\varphi=0$,  then 
\be\label{c21}
\varphi(x)=-\eta\int_{B_1}{\mathbb G}(x,y;\Gamma_0)\varphi(y)dy\quad {\rm for} \; x\in B_1, 
\en
which implies that $\Delta\varphi+\kappa_1^2\varphi=0$ in $B_1$. Furthermore, we can extend $\varphi$ to $\R^2\se B_1$ by the right hand side of (\ref{c21}). Thus, $\varphi$ satisfies the Helmholtz equation $\Delta\varphi+\kappa_{R}^2\varphi=0$ in $\R^2$ and Sommerfeld radiation condition. Then we conclude $\varphi=0$ from the uniqueness of Problem (\ref{c4}) which is a direct consequence of theorem {\ref{thm3.1}}. Thus, the operator $I+\eta T_0:  L^p(B_1)\rightarrow  L^p(B_1)$ is injective, so it is bijective from the Riesz-Fredholm theory and has a bounded inverse which implies that (\ref{c20}) holds. The proof is completed.
\end{proof}

Based on Theorem \ref{thm1}, Theorem \ref{thm2}, and (\ref{c3}), we have obtained the existence of Problem (\ref{c4}).
For Problem (\ref{c2}),  we now choose a sufficient large $R$ such that the local perturbation of $\Gamma$ lies totally above the local perturbation of $\Gamma_R$. For a fixed $x_s\in \Omega_1$, we consider the difference
\ben
 W(x,x_s):={\mathbb U}(x,x_s;\Gamma)-{\mathbb U}(x,x_s;\Gamma_R)
\enn
which satisfies 
\be\label{c24}
\left\{\begin{array}{lll}
         \Delta_x W(x,x_s)+\kappa_R^2W(x,x_s)=\varphi_2(x) \;\quad\qquad\;\; \textrm{in}\;\;\R^2 \\[2mm]
         \ds\lim_{r\rightarrow \infty}\sqrt{r}\left(\frac{\partial W(x,x_s)}{\partial r}-{\rm i}\kappa_RW(x,x_s)\right)=0\quad {\rm for}\; r=|x|
       \end{array}
\right.
\en
with 
\begin{eqnarray*}
\varphi_2(x):=\left\{\begin{aligned}&-\eta {\mathbb U}(x,x_s;\Gamma)\qquad{\rm in}\;\; B_2\\
&0\qquad\qquad\qquad\quad\;\;\;{\rm in}\;\;\R^2\se{\ov{B_2}},
\end{aligned}
\right.
\end{eqnarray*}{s}
and 
$B_2:=\{(x_1,x_2)\in\R^2: -\sqrt{R^2-x_1^2}<x_2<f(x_1)\;\;{\rm for}\;\;|x_1|<R\}.$

By similar arguments as in Theorem \ref{thm1} for Problem (\ref{c4}), it is not difficult to find that Problem (\ref{c24}) can be equivalently formulated as the following Lippmann-Schwinger equation defined in $B_2$. 
\begin{theorem}\label{thm3}
Let $u^{\rm inc}(x, x_s)=\partial_{x_{\ell}}\Phi_{\kappa_1}(x,x_s)\in L^{p}_{\rm loc}(\R^2)$ for $1<p<2$ and fixed $x_s\in \Omega_1$. If ${\mathbb U}^s(x,x_s;\Gamma)\in W^{2,p}_{\rm loc}(\R^2)$ is the scattered field associated with Problem (\ref{c2}), then ${\mathbb U}(x,x_s;\Gamma)|_{B_2}:=({\mathbb U}^s(x,x_s;\Gamma)+u^{\rm inc}(x,x_s))|_{B_2}$ is a solution to the following Lippmann-Schwinger equation 
\begin{equation}\label{c25} 
{\mathbb U}(x,x_s;\Gamma)-\eta\int_{B_2}{\mathbb G}(x,y;\Gamma_R){\mathbb U}(y,x_s;\Gamma)dy={\mathbb U}(x,x_s;\Gamma_R)\quad x\in B_2.
\end{equation}
Conversely, if ${\mathbb U}(x,x_s;\Gamma)|_{B_2}\in L^{p}(B_2)$ is a solution to the Lippmann-Schwinger equation (\ref{c25}), then ${\mathbb U}^s(x,x_s;\Gamma):={\mathbb U}(x,x_s;\Gamma)-u^{\rm inc}(x,x_s)$ can be extended to a solution to Problem (\ref{c2}) such that ${\mathbb U}^s(x,x_s;\Gamma)\in W_{\rm loc}^{2,p}(\R^2).$
\end{theorem}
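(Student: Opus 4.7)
The plan is to follow the blueprint of Theorem \ref{thm1}, but with $\Gamma_0$ replaced by $\Gamma_R$, $B_1$ replaced by $B_2$, and the difference $V$ replaced by $W(x,x_s):={\mathbb U}(x,x_s;\Gamma)-{\mathbb U}(x,x_s;\Gamma_R)$, which solves Problem (\ref{c24}). The background Green's function ${\mathbb G}(\cdot,\cdot;\Gamma_R)$ now plays the role previously played by ${\mathbb G}(\cdot,\cdot;\Gamma_0)$; its existence, uniqueness and $W^{2,p}_{\rm loc}$ regularity away from the source are guaranteed by Theorems \ref{thm1}--\ref{thm2} together with the relation (\ref{c3}).

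For the forward direction, fix $x\in B_2$ and a small $\varepsilon>0$ so that $B_\varepsilon(x)\subset B_2$. I would apply Green's second identity to $W(\cdot,x_s)$ and ${\mathbb G}(\cdot,x;\Gamma_R)$ on $B_2\setminus B_\varepsilon(x)$, obtaining a decomposition
$I_1-I_2-I_3$ of boundary integrals over $\Gamma\setminus\Gamma_R$, $\Gamma_R\setminus\Gamma$, and $\partial B_\varepsilon(x)$, respectively. The volume integral on the left, thanks to the PDE for $W$ in (\ref{c24}) and the defining equation of ${\mathbb G}(\cdot,x;\Gamma_R)$, tends to $-\eta\int_{B_2}{\mathbb G}(y,x;\Gamma_R){\mathbb U}(y,x_s;\Gamma)\,dy$ as $\varepsilon\to0$. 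Splitting ${\mathbb G}(y,x;\Gamma_R)=\Phi_{\kappa_2}(y,x)+{\mathbb G}^s(y,x;\Gamma_R)$ on $\partial B_\varepsilon(x)$, the mean value theorem gives $I_3\to W(x,x_s)$, exactly as in (\ref{c9})--(\ref{c11}).

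The main obstacle, as in the proof of Theorem \ref{thm1}, is showing that the ``interface'' contributions $I_1-I_2$ vanish in the limit of large balls $B_{R'}$. To this end I would apply Green's second identity to $W$ and ${\mathbb G}(\cdot,x;\Gamma_R)$ on $B_{R'}^{+}$ and $B_{R'}^{-}\setminus\overline{B_2}$ so that $I_1-I_2$ is expressed entirely as a boundary integral over $\partial B_{R'}$ involving the Sommerfeld combinations $\partial_\nu W-{\rm i}\kappa_R W$ and $\partial_\nu{\mathbb G}-{\rm i}\kappa_R{\mathbb G}$. The a priori $L^2$-boundedness of $W$ and ${\mathbb G}(\cdot,x;\Gamma_R)$ on $\partial B_{R'}$ is obtained as in (\ref{c13})--(\ref{c16}): insert the imaginary part of Green's first identity for $W$ and $\overline W$ into the squared Sommerfeld identities, using the fact that the contribution of the bounded interface difference $(\Gamma\setminus\Gamma_R)\cup(\Gamma_R\setminus\Gamma)$ is uniformly controlled. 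Cauchy--Schwarz then forces $I_1-I_2\to 0$. Combining these ingredients and using the reciprocity ${\mathbb G}(y,x;\Gamma_R)={\mathbb G}(x,y;\Gamma_R)$ (which follows from Green's second identity applied to two background Green's functions, together with the radiation behavior just established) yields (\ref{c25}).

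For the converse direction, define ${\mathbb U}(x,x_s;\Gamma)$ on all of $\R^2$ by
\[
{\mathbb U}(x,x_s;\Gamma):={\mathbb U}(x,x_s;\Gamma_R)+\eta\int_{B_2}{\mathbb G}(x,y;\Gamma_R){\mathbb U}(y,x_s;\Gamma)\,dy.
\]
Standard $W^{2,p}_{\rm loc}$ regularity for volume potentials with $L^p$ density and $W^{2,p}$-type kernels (as in \cite{GT83}) gives ${\mathbb U}^s\in W^{2,p}_{\rm loc}(\R^2)$. Differentiating under the integral and using $\Delta_x{\mathbb G}(x,y;\Gamma_R)+\kappa_R^2(x){\mathbb G}(x,y;\Gamma_R)=-\delta(x-y)$ together with the distinction $\kappa_R=\kappa_1$ in $\Omega_{1,R}$, $\kappa_R=\kappa_2$ in $\Omega_{2,R}$, I would verify case by case (in $\Omega_1\cap\Omega_{1,R}$, in $B_2$, and in $\Omega_2\cap\Omega_{2,R}$) that ${\mathbb U}$ satisfies $\Delta_x{\mathbb U}+\kappa_\Gamma^2{\mathbb U}=-\partial_{x_\ell}\delta(x-x_s)$, the critical check being inside $B_2$ where the jump $\eta=\kappa_2^2-\kappa_1^2$ exactly restores the coefficient $\kappa_2^2=\kappa_\Gamma^2$. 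The Sommerfeld condition for ${\mathbb U}^s$ is inherited from that of ${\mathbb G}^s(\cdot,\cdot;\Gamma_R)$, so ${\mathbb U}$ solves (\ref{c2}), completing the equivalence.
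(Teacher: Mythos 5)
Your proposal is correct and follows exactly the route the paper intends: the paper gives no proof of Theorem \ref{thm3} at all, stating only that it follows ``by similar arguments as in Theorem \ref{thm1}'', and your plan carries out precisely that substitution ($\Gamma_0\to\Gamma_R$, $B_1\to B_2$, $V\to W$), including the sign change coming from $\varphi_2=-\eta\,{\mathbb U}(\cdot,x_s;\Gamma)$. One index should be corrected: since $x\in B_2\subset\Omega_{1,R}$ lies \emph{above} $\Gamma_R$, the singularity of ${\mathbb G}(\cdot,x;\Gamma_R)$ is that of $\Phi_{\kappa_1}$, so on $\partial B_\varepsilon(x)$ you should split off $\Phi_{\kappa_1}$ rather than $\Phi_{\kappa_2}$ --- harmless for the limit $I_{31}\to W(x,x_s)$, but needed so that the remainder ${\mathbb G}-\Phi_{\kappa_1}$ solves a homogeneous Helmholtz equation near $x$ and the analogue of (\ref{c10}) applies. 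Your observation that reciprocity of ${\mathbb G}(\cdot,\cdot;\Gamma_R)$ must now be derived from Green's identities (no explicit Fourier formula being available, unlike for $\Gamma_0$) is a genuine point the paper glosses over.
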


Now we arrive at the position to present the main well-posedness result for the scattering problem without embedded obstacles.
\begin{theorem}\label{thm4}
For $1< p < 2$, there exists a unique solution ${\mathbb U}(\cdot,x_s;\Gamma)\in L^p(B_2)$ to (\ref{c25}) such that 
\be\label{c26}
\|{\mathbb U}(\cdot,x_s;\Gamma)\|_{L^p(B_2)}\leq C_2\|{\mathbb U}(\cdot,x_s;\Gamma_R)\|_{L^p(B_2)}.
\en
\end{theorem}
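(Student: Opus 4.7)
The plan is to mimic the Riesz--Fredholm architecture of Theorem \ref{thm2}, with the background interface upgraded from $\Gamma_0$ to $\Gamma_R$ and the reference domain changed from $B_1$ to $B_2$. Introduce
\begin{equation*}
T_R\varphi(x):=\int_{B_2}{\mathbb G}(x,y;\Gamma_R)\varphi(y)\,dy,\qquad x\in B_2,
\end{equation*}
so that (\ref{c25}) becomes the operator equation $(I-\eta T_R){\mathbb U}(\cdot,x_s;\Gamma)={\mathbb U}(\cdot,x_s;\Gamma_R)$ in $L^p(B_2)$.

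For compactness of $T_R$ on $L^p(B_2)$, differentiating under the integral and using the defining equation for ${\mathbb G}(\cdot,\cdot;\Gamma_R)$ shows that $u:=T_R\varphi$ solves $\Delta u+\kappa_1^2 u=-\varphi$ in $B_2$ (note $B_2\subset\Omega_{1,R}$, so $\kappa_R=\kappa_1$ there). Standard $L^p$ elliptic regularity, combined with the fact that the kernel ${\mathbb G}(\cdot,\cdot;\Gamma_R)$ has the same logarithmic singularity as $\Phi_{\kappa_1}$ with a smoother remainder, yields boundedness $T_R\colon L^p(B_2)\to W^{2,p}(B_2)$, and hence compactness on $L^p(B_2)$ by the Rellich--Kondrachov embedding. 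Thus $I-\eta T_R$ is Fredholm of index zero on $L^p(B_2)$.

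The substantive new ingredient is injectivity, which demands careful bookkeeping of two distinct piecewise-constant wave numbers. Suppose $(I-\eta T_R)\varphi=0$ for some $\varphi\in L^p(B_2)$, and extend
\begin{equation*}
\widetilde\varphi(x):=\eta\int_{B_2}{\mathbb G}(x,y;\Gamma_R)\varphi(y)\,dy,\qquad x\in\R^2,
\end{equation*}
which coincides with $\varphi$ on $B_2$. Applying $\Delta+\kappa_R^2$ gives $-\eta\varphi$ on $B_2$ and $0$ on $\R^2\se\ov{B_2}$. In $B_2$, since $\kappa_R=\kappa_1$ and $\kappa_1^2+\eta=\kappa_2^2=\kappa_\Gamma^2|_{B_2}$, we get $(\Delta+\kappa_\Gamma^2)\widetilde\varphi=0$; in $\Omega_1$ and in $\Omega_{2,R}$, which together cover $\R^2\se\ov{B_2}$ because $R$ was chosen so that the perturbation of $\Gamma$ lies strictly above that of $\Gamma_R$, we have $\kappa_R=\kappa_\Gamma$, so again $(\Delta+\kappa_\Gamma^2)\widetilde\varphi=0$. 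Since $\widetilde\varphi\in W^{2,p}_{\rm loc}(\R^2)$ is automatically $C^1$ across $\Gamma$ and inherits the Sommerfeld radiation condition from ${\mathbb G}(\cdot,\cdot;\Gamma_R)$, Theorem \ref{thm3.1} applied to Problem (\ref{c2}) with $u^{\rm inc}=0$ forces $\widetilde\varphi\equiv 0$, whence $\varphi=0$.

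Riesz--Fredholm theory now supplies a bounded inverse $(I-\eta T_R)^{-1}$ on $L^p(B_2)$, which simultaneously delivers existence of the unique solution ${\mathbb U}(\cdot,x_s;\Gamma)\in L^p(B_2)$ and the estimate (\ref{c26}) with $C_2:=\|(I-\eta T_R)^{-1}\|_{L^p\to L^p}$. The main obstacle is the layered-medium bookkeeping in the injectivity step: the algebraic identity $\kappa_R^2+\eta=\kappa_\Gamma^2$ on $B_2$ is what converts the $\Gamma_R$-medium Helmholtz equation satisfied by $\widetilde\varphi$ on $B_2$ into the $\Gamma$-medium Helmholtz equation, thereby unifying the equation across the three regions $B_2$, $\Omega_1$, and $\Omega_{2,R}$ and allowing the global uniqueness result of Theorem \ref{thm3.1} to be invoked with the correct wave number.
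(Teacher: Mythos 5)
Your proposal is correct and follows exactly the route the paper intends: the paper omits this proof as ``analogous to Theorem \ref{thm2}'', and you have carried out precisely that analogue---compactness of the volume potential operator on $L^p(B_2)$, injectivity via extension of $\widetilde\varphi$ to $\R^2$ and the uniqueness result of Theorem \ref{thm3.1}, and Riesz--Fredholm for existence and the bound (\ref{c26}). Your bookkeeping of the sign change ($I-\eta T_R$ versus $I+\eta T_0$) and of the identity $\kappa_R^2+\eta=\kappa_\Gamma^2$ on $B_2$ is exactly what the adaptation requires.
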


\begin{proof}

We here omit the proof, since it is analogous to Theorem \ref{thm2}. 
\end{proof}

\subsection{The general case $D\neq \emptyset$ or $n(x)\not\equiv 1$}

Based on the well-posedness of Problem (\ref{c2}), we are able to deal with the general case when $D\neq\emptyset$ or $n(x)\not\equiv 1$. Similarly, we only consider the case where the incident wave is taken as hyper-singular point sources. To this end, we define the difference  
\ben
 w(x,x_s):=u(x,x_s)-{\mathbb U}(x,x_s;\Gamma)\qquad {\rm in\;}\R^2\se D.
\enn
For an impenetrable obstacle $D$,  it is easy to see that $w$ satisfies
\be\label{c27}
\left\{\begin{array}{lll}
         \Delta_x w(x,x_s)+\kappa^2w(x,x_s)=0 \qquad\qquad\qquad \textrm{in}\;\; \R^2\se\ov{D} \\[2mm]
          \mathcal{B}w(x,x_s)=-\mathcal{B}{\mathbb U}(x,x_s;\Gamma)\;\qquad\qquad\qquad\;\;\textrm{on}\;\; \pa{D}\\[2mm]
         \ds\lim_{r\rightarrow \infty}\sqrt{r}\left(\frac{\partial w(x,x_s)}{\partial r}-{\rm i}\kappa w(x,x_s)\right)=0\quad\;\; {\rm for\;\;}r=|x|
       \end{array}
\right.
\en
by combining Problems (\ref{a1}) and (\ref{c2}).

If $D$ is a penetrable obstacle, it follows that $w$ satisfies 
 \be\label{c30}
\left\{\begin{array}{lll}
         \Delta_x w(x,x_s)+\kappa^2n(x)w(x,x_s)=\varphi_3(x) \qquad\qquad\;\; \textrm{in}\;\; \R^2 \\[2mm]
         \ds\lim_{r\rightarrow \infty}\sqrt{r}\left(\frac{\partial w(x,x_s)}{\partial r}-{\rm i}\kappa w(x,x_s)\right)=0\qquad\qquad {\rm for\;\;}r=|x|,
       \end{array}
\right.
\en
with $\varphi_3(x):=\kappa^2(1-n(x)){\mathbb U}(x,x_s;\Gamma)$. Thus, the well-posedness of Problem (\ref{a1}) or (\ref{a2}) can be now reduced to the case of Problem (\ref{c27}) or (\ref{c30}).
\begin{theorem}\label{thm5}
Problem (\ref{a1}) admits a unique solution.
\end{theorem}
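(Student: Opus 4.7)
The uniqueness part of Theorem~\ref{thm5} is already covered by Theorem~\ref{thm3.1}, so the plan focuses on existence. The strategy is to convert the reduced exterior scattering problem (\ref{c27}) for $w = u - \mathbb{U}(\cdot, x_s; \Gamma)$ into a boundary integral equation on $\pa D$ whose kernel is the background Green's function $\mathbb{G}(x, y; \Gamma)$ for the two-layered medium with rough interface $\Gamma$. The existence of $\mathbb{G}(\cdot, y; \Gamma)$ for any source $y\in\Om_1\cup(\Om_2\se\ov{D})$ follows from the analogues of Theorem~\ref{thm3} and Theorem~\ref{thm4} with the ordinary point source $\Phi_{\kappa_1}(\cdot, y)$ (resp.\ $\Phi_{\kappa_2}(\cdot, y)$) in place of the hyper-singular source, as already remarked in Subsection~3.1.

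Next I would represent $w$ as a combined single- and double-layer potential over $\pa D$ with kernel $\mathbb{G}(\cdot, y; \Gamma)$. Concretely, in the sound-soft case $\mathcal{B}w = w$ the ansatz
\begin{equation*}
w(x) = \int_{\pa D}\left[\frac{\pa \mathbb{G}(x, y; \Gamma)}{\pa \nu(y)} - {\rm i}\tau\, \mathbb{G}(x, y; \Gamma)\right]\phi(y)\, ds(y), \qquad x \in \R^2\se\pa D,
\end{equation*}
with a coupling constant $\tau>0$, automatically satisfies the Helmholtz equation away from $\pa D$, the transmission conditions across $\Gamma$ (encoded in $\mathbb{G}$) and the Sommerfeld radiation condition. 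In the impedance case I would use an analogous combined form adapted to $\pa_\nu + {\rm i}\lambda$. Since the correction $\mathbb{G}(x, y; \Gamma) - \Phi_{\kappa_2}(x, y)$ is smooth across $\pa D\subset\Om_2\se\ov{D}$, the classical jump relations for layer potentials remain valid, and imposing the boundary condition $\mathcal{B}w = -\mathcal{B}\mathbb{U}(\cdot, x_s; \Gamma)$ on $\pa D$ converts (\ref{c27}) into a second-kind integral equation $(I + K)\phi = g$ with $K$ compact on $C(\pa D)$ (or $L^2(\pa D)$), since $K$ is a compact perturbation of the classical combined boundary operator.

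Existence then follows from the Riesz--Fredholm alternative once injectivity of $I + K$ is established. A null element $\phi$ produces a function $w$ solving the homogeneous version of (\ref{c27}) in $\R^2\se\ov{D}$, and Theorem~\ref{thm3.1} forces $w\equiv 0$ in the exterior. The jump relations across $\pa D$ then reduce the problem to an interior Helmholtz problem on $D$ with boundary data coupling $\phi$ and its associated normal trace; the positivity $\tau>0$ (respectively, the analogous coupling in the impedance case) excludes interior resonances via the classical Green's identity argument, as in \cite{CK13}, yielding $\phi=0$.

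The main obstacle I anticipate is the rigorous reduction of the layer potentials built from $\mathbb{G}(\cdot, \cdot; \Gamma)$ to their free-space counterparts: one must verify that the correction $\mathbb{G}(\cdot, \cdot; \Gamma) - \Phi_{\kappa_2}$ is sufficiently regular in a neighborhood of $\pa D$ so that the standard jump relations, continuity, and compactness estimates transfer unchanged. Given the smoothness of $\Gamma$ and the $C^2$-regularity of $\pa D$, this smoothness of the correction can be extracted from the integral representation used to construct $\mathbb{G}(\cdot,\cdot;\Gamma)$ via (\ref{c25}) together with elliptic regularity. Once this technical reduction is in hand, compactness, the Fredholm alternative, and the injectivity argument go through in parallel with the classical exterior scattering theory, completing the proof.
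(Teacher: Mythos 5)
Your proposal is correct and follows essentially the same route as the paper: a combined double- and single-layer potential over $\pa D$ with kernel the background Green's function ${\mathbb G}(\cdot,\cdot;\Gamma)$, reduction to a second-kind equation whose boundary operators are compact because the correction ${\mathbb G}-\Phi_{\kappa_2}$ is smooth near $\pa D$, and the Riesz--Fredholm alternative with injectivity supplied by the exterior uniqueness theorem. The only cosmetic differences are your general coupling constant $\tau>0$ versus the paper's fixed choice $-{\rm i}$, and your working space $C(\pa D)$ or $L^2(\pa D)$ versus the paper's $L^p(\pa D)$, $p>1$, which it needs for the a priori estimate (\ref{c29}) used later in the uniqueness proof.
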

\begin{proof}
The uniqueness of (\ref{c27}) follows immediately from Theorem \ref{thm3.1}. For the existence of the solution, we only consider the case of a sound-soft embedded obstacle $D$. 
Other cases can be dealt with in a similar manner with a slight modification. In this case, one tries to seek a solution in the form of a combined acoustic double- and single-layer potential
\be\label{c28}
w(x,x_s) = \int_{\pa D}\left(\frac{\pa {\mathbb G}(x,y;\Gamma)}{\pa \nu(y)}-{\rm i} {\mathbb G}(x,y;\Gamma)\right)\psi(y)ds(y) \quad \rm{for}\quad x\in\R^2\se\ov{D}
\en
with some unknown density $\psi\in L^p(\pa{D})$ for $p>1$. 
Then from the boundary condition imposed on $\partial D$, we see that the potential (\ref{c28}) solve Problem (\ref{c27}), provided the density $\psi$ is a solution of a second-kind integral equation 
\ben
(I+K-{\rm i}S)\psi = -2{\mathbb U}(x,x_s;\Gamma)\quad{\rm on}\;\partial D,
\enn
where $S$ and $K$ are the single- and double-layer operators given by
\ben
&&(S\psi)(x):=2\int_{\partial D}{\mathbb G}(x,y;\Gamma)\psi(y)ds(y)\qquad\quad x\in\partial D\\
&&(K\psi)(x):=2\int_{\partial D}\frac{\pa {\mathbb G}(x,y;\Gamma)}{\pa \nu(y)}\psi(y)ds(y)\qquad x\in\partial D,
\enn
respectively. Since $\pa D\in C^2$, we deduce by Lemma 1 in \cite{RP01}  that both $S$ and $K$ are bounded and compact operators in $L^p(\pa D)$. It is thus
concluded that $I+K-{\rm i}S: L^p(\pa{D})\rightarrow L^p(\pa{D})$ is a Fredholm operator with index $0$. It then follows from the arguments for Theorem 3.11 in \cite{CK13}, the existence of a density $\psi$ can be established with the aid of the Riesz-Fredholm theory. Furthermore, we also have the following estimate 
\be\label{c29}
\|\psi\|_{L^p(\pa D)}\leq C\|{\mathbb U}(x,x_s;\Gamma)\|_{L^p(\pa D)}.
\en
The proof is thus complete. 
\end{proof}

For Problem (\ref{a2}), we can also establish the following unique solvability. 
\begin{theorem}\label{thm6}
Problem (\ref{a2}) admits a unique solution.
\end{theorem}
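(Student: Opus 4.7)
The plan is to mirror the proof of Theorem \ref{thm5} for the impenetrable case, with the combined layer potential replaced by a volume Lippmann-Schwinger equation appropriate for a penetrable inclusion. Uniqueness is immediate from Theorem \ref{thm3.1}, so only existence requires attention.

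First I would set $w(x,x_s) := u(x,x_s) - {\mathbb U}(x,x_s;\Gamma)$. Since $\kappa^2 n - \kappa_\Gamma^2$ is supported in $\overline{D}$ and equals $\kappa_2^2(n-1)$ there (with $\kappa_\Gamma = \kappa_j$ in $\Omega_j$), Problem (\ref{c30}) is equivalent to the radiating equation
\ben
\Delta w + \kappa_\Gamma^2 w = -\kappa_2^2(n-1)u \quad \mbox{in}\ \R^2.
\enn
Convolving with the background Green's function ${\mathbb G}(\cdot,\cdot;\Gamma)$ for the locally rough interface---whose existence follows by the same argument used for ${\mathbb U}(\cdot,x_s;\Gamma)$ in Theorems \ref{thm1}--\ref{thm4}, as noted in the text---reduces Problem (\ref{a2}) to the volume Lippmann-Schwinger equation
\be\label{LSthm6}
u(x,x_s) - \kappa_2^2\int_D (n(y)-1)\,{\mathbb G}(x,y;\Gamma)\,u(y,x_s)\,dy = {\mathbb U}(x,x_s;\Gamma),\qquad x\in D.
\en
The equivalence between $L^p(D)$-solutions of (\ref{LSthm6}) and $W^{2,p}_{\rm loc}(\R^2)$-solutions of (\ref{a2}) is verified exactly as in Theorem \ref{thm1}, using Green's second identity together with the transmission and radiation properties of ${\mathbb G}(\cdot,\cdot;\Gamma)$.

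For solvability in $L^p(D)$, define $T\varphi(x) := \kappa_2^2\int_D (n(y)-1){\mathbb G}(x,y;\Gamma)\varphi(y)\,dy$. Since $D$ lies strictly inside $\Omega_2$ and is bounded, the kernel ${\mathbb G}(x,y;\Gamma)$ on $D\times D$ differs from $\Phi_{\kappa_2}(x,y)$ only by a smooth scattered correction, so standard $W^{2,p}$-estimates for Newtonian-type volume potentials yield that $T:L^p(D)\to W^{2,p}(D)$ is bounded and hence $T:L^p(D)\to L^p(D)$ is compact by Sobolev embedding. The Riesz-Fredholm theory then reduces existence to the injectivity of $I - T$: if $\varphi\in L^p(D)$ satisfies $\varphi = T\varphi$, extending $\varphi$ to $\R^2$ by the right-hand side of its integral representation produces a radiating $W^{2,p}_{\rm loc}$-solution of (\ref{a2}) with $u^{\rm inc} = 0$, which must vanish by Theorem \ref{thm3.1}. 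Boundedness of $(I-T)^{-1}$ simultaneously yields existence and the a priori bound $\|u\|_{L^p(D)}\leq C\|{\mathbb U}(\cdot,x_s;\Gamma)\|_{L^p(D)}$.

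The main technical obstacle will be establishing the $W^{2,p}(D)$ mapping property of the layered-medium volume potential. Splitting ${\mathbb G}(x,y;\Gamma) = \Phi_{\kappa_2}(x,y) + {\mathbb G}^s(x,y;\Gamma)$ on $D\times D$ reduces this to a Calder\'on-Zygmund bound for the free-space Hankel kernel plus smoothness of the scattered correction (the latter ensured by elliptic regularity since $\partial D$ is at positive distance from $\Gamma$); the rest of the argument is a direct transcription of the $L^p$ Fredholm framework already developed in Theorems \ref{thm2} and \ref{thm4}.
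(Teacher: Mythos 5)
Your proposal follows essentially the same route as the paper: reduce to the volume Lippmann--Schwinger equation $u - \kappa_2^2\int_D (n-1)\,{\mathbb G}(\cdot,y;\Gamma)\,u\,dy = {\mathbb U}(\cdot,\cdot;\Gamma)$ via the difference $w = u - {\mathbb U}(\cdot,\cdot;\Gamma)$, then apply the Riesz--Fredholm theory with injectivity supplied by Theorem \ref{thm3.1}, exactly as the paper does by invoking the argument of Theorem \ref{thm2}. Your additional detail on the $W^{2,p}$ mapping property of the layered-medium volume potential is a correct filling-in of what the paper leaves implicit.
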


\begin{proof}
With the aid of Theorem \ref{thm3.1}, we only need to show the existence of a solution of Problem (\ref{a2}). Note that  $w(x, x_s):=u(x, x_s)-{\mathbb U}(x,x_s;\Gamma)$ satisfies Problem (\ref{c30}). 
By an application of Green's theorems, it is derived that (\ref{c30}) is equivalent to the following Lippmann-Schwinger equation 
\ben\label{c31}
w(x, x_s)=-\kappa^2\int_{\R^2}{\mathbb G}(x,y;\Gamma)m(y)(w(y,x_s)+{\mathbb U}(x,y;\Gamma))dy
\enn
where $m:=1-n$.

Recalling $w(x, x_s)=u(x, x_s)-{\mathbb U}(x,y;\Gamma)$ in $\R^2$ gives 
\be\label{c32}
u(x, x_s)+\kappa^2\int_{\R^2} {\mathbb G}(x,y;\Gamma)m(y)u(y,x_s)dy={\mathbb U}(x,x_s;\Gamma).
\en
By the similar arguments with Theorem \ref{thm2}, we obtain that (\ref{c32}) is uniquely solvable and 
\begin{eqnarray*}
\|u(\cdot,x_s)\|_{L^p(D)}\leq C\|{\mathbb U}(\cdot,x_s;\Gamma)\|_{L^p(D)}.
\end{eqnarray*}
\end{proof}

\section{Uniqueness of the inverse problem}
\setcounter{equation}{0}
Based on the well-posedness of Problem (\ref{a1}) and (\ref{a2}) in Section 3, we investigate in this section the inverse problem of recovering both the locally rough surface $\G$ and the embedded obstacle $D$ (or the refractive index $n$) with its surrounding homogeneous medium $\kappa_2$ from the knowledge of scattered near-field data, associated with incident point sources $u^{\rm inc}(x,x_s) = \Phi_{\kappa_1}(x,x_s)$, taken only on a line segment denoted by 
\ben
\Gamma_{b,a}:=\left\{(x_1,x_2)\in\R^2: \;|x_1|\leq a,\;x_2=b,\; b>\|f\|_{L^\infty(\R)}\right\}.
\enn
Motivated by \cite{YZZ13}, we establish two global uniqueness theorems for determining all unknowns in the inverse problems. Our method is based on two ingredients, one is constructing an interior transmission problem in a small domain, which is uniquely solvable when the domain is chosen to be small enough; the other is a priori estimate of solutions to the transmission problem with data in $L^p(1<p<2)$, where the priori estimate depends on the well-posedness of the direct scattering problem in the previous section.  

For convenience, let $u_j^s(x,x_s)$, $j=1,2$, denote the scattered field in $\Om_1$ and $u_j(x,x_s)$ denote the transmitted field in $\Om_2$ associated with Problem (\ref{a1}) (or (\ref{a2})) with the complex scatterer 
$(\Gamma_j,\kappa_{2,j}, D_j,\mathcal{B}_j)$(or $(\Gamma_j, \kappa_{2,j}, n_j)$).

\subsection{The case of an impenetrable obstacle $D$}
In this subsection, $D$ is considered to be an impenetrable obstacle with the physical property $\mathcal{B}$ which is embedded into the lower-half space $\Om_2$.

\begin{theorem}\label{thm7}
If $u_1^s(x,x_s)=u_2^s(x,x_s)$ for all $x,x_s\in\Gamma_{b,a}$, then $(\Gamma_1,\kappa_{2,1}, D_1,\mathcal{B}_1)=(\Gamma_2,\kappa_{2,2}, D_2,\mathcal{B}_2)$.
\end{theorem}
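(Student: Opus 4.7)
My plan is to establish the three equalities $\Gamma_1=\Gamma_2$, $\kappa_{2,1}=\kappa_{2,2}$ and $(D_1,\mathcal{B}_1)=(D_2,\mathcal{B}_2)$ in this order by contradiction, at each stage coupling the two unknown scattering systems in a small ball as an interior transmission problem (ITP), in the spirit of \cite{YZZ13}. As a preliminary I would upgrade the measured identity: by reciprocity of the scattered field and Holmgren-type continuation across the common upper half-space, the equality $u_1^s(x,x_s)=u_2^s(x,x_s)$ on $\Gamma_{b,a}\times\Gamma_{b,a}$ extends to all pairs $(x,x_s)$ lying above both interfaces and away from the obstacles; differentiating in $x_s$ then propagates the same identity to the fields ${\mathbb U}_j(\cdot,x_s)$ generated by the hyper-singular incident waves $\partial_{x_{s,\ell}}\Phi_{\kappa_1}(\cdot,x_s)$ introduced in Section 3.

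Suppose $\Gamma_1\neq\Gamma_2$. After relabelling I fix $x^*\in\Gamma_1$ strictly above $\Gamma_2$ and pick $\delta>0$ so small that $B_\delta(x^*)$ lies above $\Gamma_2$ and is disjoint from $\overline{D_1\cup D_2}$; split $B_\delta(x^*)$ along $\Gamma_1$ into an upper cap $B^+$ and a lower cap $B^-$. For any $x_s\in B^+$ the extended identity gives ${\mathbb U}_1\equiv{\mathbb U}_2$ on $B^+$, and combining the transmission conditions for ${\mathbb U}_1$ across $\Gamma_1$ with the smoothness of ${\mathbb U}_2$ across $\Gamma_1$ (which is merely an interior arc for configuration $2$) forces
\begin{equation*}
{\mathbb U}_1|_- = {\mathbb U}_2|_-,\qquad \partial_\nu{\mathbb U}_1|_- = \partial_\nu{\mathbb U}_2|_-\qquad\text{on } \Gamma_1\cap\overline{B^-}.
\end{equation*}
Together with $\Delta{\mathbb U}_1+\kappa_{2,1}^2{\mathbb U}_1=0$ and $\Delta{\mathbb U}_2+\kappa_1^2{\mathbb U}_2=0$ in $B^-$, this realizes $({\mathbb U}_1,{\mathbb U}_2)$ as a solution of an ITP with wave numbers $\kappa_{2,1},\kappa_1$ and matching Cauchy data on $\Gamma_1\cap\overline{B^-}$. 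For $\delta$ smaller than the first transmission eigenvalue of this ITP, Fredholm theory yields a well-posedness estimate of the form
\begin{equation*}
\|{\mathbb U}_1(\cdot,x_s)\|_{L^2(B^-)}+\|{\mathbb U}_2(\cdot,x_s)\|_{L^2(B^-)}\le C\bigl(\|{\mathbb U}_1-{\mathbb U}_2\|_{H^{1/2}}+\|\partial_\nu{\mathbb U}_1-\partial_\nu{\mathbb U}_2\|_{H^{-1/2}}\bigr)
\end{equation*}
on the artificial arc $\partial B^-\setminus\overline{\Gamma_1}$. Since this arc stays uniformly away from $x_s$ and from the obstacles, the $W^{2,p}$-bounds of Theorems \ref{thm5}--\ref{thm6} for $1<p<2$ render the right-hand side bounded uniformly in $x_s$.

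Letting $x_s\to x^*$ produces the contradiction. Because $B^-$ sits in the upper half-space for configuration $2$, we have ${\mathbb U}_2(\cdot,x_s)=\partial_{x_{s,\ell}}\Phi_{\kappa_1}(\cdot,x_s)+{\mathbb U}_2^s(\cdot,x_s)$ on $B^-$ with $\|{\mathbb U}_2^s\|_{L^2(B^-)}$ uniformly bounded, whereas $\|\partial_{x_{s,\ell}}\Phi_{\kappa_1}(\cdot,x_s)\|_{L^2(B^-)}\to\infty$ as $x_s\to x^*\in\partial B^-$, since the hyper-singular kernel fails to be square-integrable at its source point. Hence $\|{\mathbb U}_2(\cdot,x_s)\|_{L^2(B^-)}\to\infty$, contradicting the uniform ITP bound, and so $\Gamma_1=\Gamma_2=:\Gamma$. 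An entirely analogous argument, with the ball relocated to a small neighbourhood of a point on $\Gamma$ inside $\Omega_2\setminus\overline{D_1\cup D_2}$ and using ${\mathbb U}_1\equiv{\mathbb U}_2$ in $\Omega_1$ to match Cauchy data from above on $\Gamma$, produces an ITP with wave numbers $\kappa_{2,1},\kappa_{2,2}$ and yields $\kappa_{2,1}=\kappa_{2,2}$. Once $(\Gamma,\kappa_2)$ is identified, the problem reduces to uniqueness of an obstacle embedded in a known layered medium, and $(D_1,\mathcal{B}_1)=(D_2,\mathcal{B}_2)$ follows from the classical arguments in \cite{ALB08,CK13}. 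The hardest point I anticipate is quantifying the uniformity of the ITP estimate: since the Cauchy-data match holds only on the portion $\Gamma_1\cap\overline{B^-}$ of the boundary, the standard ITP bound has to be localized there, with the complementary artificial arc controlled via the $L^p$-theory of Section 3.
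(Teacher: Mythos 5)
Your proposal is correct and follows essentially the same route as the paper: extend the data identity to the common upper half-space and to the hyper-singular sources, couple the two configurations in the small cap between $\Gamma_1$ and $\Gamma_2$ as an interior transmission problem with wave numbers $\kappa_{2,1},\kappa_1$, contradict the uniform ITP bound (obtained from the $L^p$ well-posedness theory of Section 3) with the blow-up of the non-square-integrable kernel $\partial_{x_\ell}\Phi_{\kappa_1}(\cdot,x_s)$ as $x_s$ approaches the interface, then repeat below the common interface for $\kappa_2$ and finish classically for $(D,\mathcal{B})$. The only cosmetic differences are that the paper transfers the identity to the hyper-singular fields via a generalized reciprocity integral rather than by differentiating in $x_s$, and that it works with the normal combination $\sum_{\ell}\nu_\ell(z^*)\partial_{x_\ell}\Phi_{\kappa_1}(\cdot,z_n)$ and a discrete sequence $z_n\to z^*$ rather than a single component $\ell$.
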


\begin{proof}
Our first goal is to establish a generalized reciprocity relation for the solutions of Problem (\ref{a1}) corresponding to two classes of different incident point sources. More precisely, 
let $u(\cdot, x_s^1)$ and $u(\cdot, x_s^2)$ be the solutions to Problem (\ref{a1}) with the incident fields $u^{\rm inc}(\cdot,x_s^1)$ and $u^{\rm inc}(\cdot, x_s^2)$, respectively, for $x_s^1,x_s^2\in \R^2\se(\ov{D}\cup\G)$.
Define two functions $E_\varepsilon(z_1)$ and $F_\varepsilon(z_2)$ by
\be\label{e1}
&&E_\varepsilon(x_s^1):=\int_{\pa B_\varepsilon(x_s^1)}\left[\frac{\pa u^{\rm inc}(y,x_s^1)}{\partial \nu(y)}u(y, x_s^2)-\frac{\pa u(y,x_s^2)}{\partial \nu(y)}u^{\rm inc}(y,x_s^1)\right]{\rm d}s(y)\\[2mm]\label{e2}
&&F_\varepsilon(x_s^2):=\int_{\pa B_\varepsilon(x_s^2)}\left[\frac{\pa u^{\rm inc}(y,x_s^2)}{\partial \nu(y)}u(y, x_s^1)-\frac{\pa u(y,x_s^1)}{\partial \nu(y)}u^{\rm inc}(y,x_s^2)\right]{\rm d}s(y),
\en
where $B_{\varepsilon}(x_s^j)$, $j=1,2,$ are two balls centered at $x_s^j$ with radius $\varepsilon>0$ which is small enough such that $B_{\varepsilon}(x_s^1)\cap B_{\varepsilon}(x_s^2)=\emptyset$. 
A direct application of the Green's theorem shows that 
\be\label{e3}
\lim_{\varepsilon\rightarrow 0}E_\varepsilon(x_s^1)=\lim_{\varepsilon\rightarrow 0}F_\varepsilon(x_s^2)\qquad {\rm for} \;\; x_s^1,x_s^2\in \R^2\se(\ov{D}\cup\G)
\en
with $x_s^1\not=x_s^2$, if $u(\cdot,x_s^j) \in L^p(B_\varepsilon(x_s^j))$, $p\geq1$.

Throughout, we consider two classes of incident wave $\Phi_{\kappa_1}(x,x_s)$ and $\partial_{x_\ell}\Phi_{\kappa_1}(x,x_s)$ for $\ell=1,2$. To distinguish these two incident waves, from now on, we will always denote them by $u^{\rm inc}(x,x_s):=\Phi_{\kappa_1}(x,x_s)$ and $\widetilde{u}^{\rm inc}(x,x_s,\ell):=\partial_{x_\ell} \Phi_{\kappa_1}(x,x_s)$. Moreover, to distinguish corresponding total waves and scattered waves, we will always use $u(x,x_s)$, $u^{s}(x,x_s)$ and $\widetilde{u}(x,x_s,\ell)$, $\widetilde{u}^{s}(x,x_s,\ell)$ to denote the total fields and the scattered fields associated with (\ref{a1}) corresponding to the incident wave $u^{\rm inc}(x,x_s)$ and $\widetilde{u}^{\rm inc}(x,x_s,\ell)$, respectively.

In (\ref{e1}) and (\ref{e2}),  we choose the incident field $u^{\rm inc}$ in the following form
\ben
u^{\rm inc}(x,x_s^1): = \Phi_{\kappa_1}(x,x_s^1),\qquad u^{\rm inc}(x,x_s^2)=\widetilde{u}^{\rm inc}(x,x_s^2,\ell):=\partial_{x_\ell} \Phi_{\kappa_1}(x,x_s^2).
\enn
Notice $\widetilde{u}(\cdot,x_s^2, \ell)$ is smooth in $B_\varepsilon(x_s^1)$ for sufficiently small $\varepsilon>0$ due to $x_s^1\not=x_s^2$. It is well-known by a standard argument that 
$\lim_{\varepsilon\rightarrow 0}E_\varepsilon(x_s^1) = \widetilde{u}(x_s^1,x_s^2, \ell)$.
One then has by (\ref{e3}) 
\be\label{e4}
\widetilde{u}(x_s^1,x_s^2, \ell) 
= \lim_{\varepsilon\rightarrow 0}\int_{\pa B_\varepsilon(x_s^2)}\left[\frac{\pa\widetilde{ u}^{\rm inc}(y,x_s^2, \ell)}{\partial \nu(y)}u(y, x_s^1)-\frac{\pa u(y,x_s^1)}{\partial \nu(y)}\widetilde{u}^{\rm inc}(y,x_s^2,\ell)\right]{\rm d}s(y),
\en
which will play an important role in the proof of the uniqueness of the inverse problem.

Our second goal is to prove uniqueness of $\Gamma$.
Assuming on the contrary $\Gamma_1\neq\Gamma_2$, without loss of generality, there exists $z^*\in\G_1\se\G_2$. Furthermore, we can choose $\varepsilon_0>0$ and $\delta_0>0$ such that   
\be\label{d1}
z_n:=z^*+\frac{\delta_0}{n}\nu(z^*)\in B_{\varepsilon_0}(z^*)\qquad{\rm for}\;\; n=1,2,\cdots
\en
and $B_{\varepsilon_0}(z^*)\cap\G_2=\emptyset$. 

Let $\widetilde{u}_{1}(\cdot,z_n,\ell)$, $\widetilde{u}_{2}(\cdot,z_n,\ell)$ denote the solutions to Problem (\ref{a1}) with $(\G, \kappa_2, D,\mathcal{B})$ replaced by  $(\G_1, k_{2,1}, D_1,\mathcal{B}_1)$,  $(\G_2, k_{2,2}, D_2,\mathcal{B}_2)$, respectively.
Then we aim to show that
\be\label{d2}
\widetilde{u}_{1}(y,z_n,\ell)=\widetilde{u}_{2}(y,z_n,\ell)\quad {\rm for} \quad y\in\Omega_1(\G_1)\cap\Omega_1(\G_2)\se\{z_n\}
\en
holds for $\ell=1,2$ and $n\in\N$, where $\Omega_1(\Gamma_j)$ and $\Omega_2(\Gamma_j)$ for $j=1,2$ denote the upper and lower half-spaces separated by $\Gamma_j$, respectively. In view of (\ref{e4}), we have 
\be\label{d3}
\widetilde{u}_m(y,z_n,\ell)=\lim_{\varepsilon\rightarrow 0}\int_{\pa B_\varepsilon(z_n)}\left[\frac{\pa \widetilde{u}^{\rm inc}(x,z_n,\ell)}{\partial \nu(x)}u_m(x, y)-\frac{\pa u_m(x,y)}{\partial \nu(x)}\widetilde{u}^{\rm inc}(x,z_n,\ell)\right]ds(x),
\en
for $m=1,2$. For $y\in\G_{b,a}$, it is apparent from $u_1^{s}(\cdot,y)=u_2^{s}(\cdot,y)$ on $\G_{b,a}$ and the analyticity of the scattered field that $u_1^{s}(\cdot,y)=u_2^{s}(\cdot,y)$ on $\G_b:=\{(x_1,x_2)\in\R^2:x_2=b\}$, which implies $u_1^{s}(\cdot,y)=u_2^{s}(\cdot,y)$ on $\Omega_1(\G_1)\cap\Omega_1(\G_2)$ due to the uniqueness of the Dirichlet problem and the analytic continuation principle. Hence, $u_1(\cdot,y)=u_2(\cdot,y)$ on $\Omega_1(\G_1)\cap\Omega_1(\G_2)\se\{y\}$. Thus, for any $z_n\in\Omega_1(\G_1)\cap\Omega_1(\G_2)$, we can choose sufficient small $\varepsilon>0$ such that $\pa B_{\varepsilon}(z_n)\subset\Omega_1(\G_1)\cap\Omega_1(\G_2)$, so we have $u_1(\cdot,y)=u_2(\cdot,y)$ on $\pa B_\varepsilon(z_n)$ which yields (\ref{d2}) holds for $y\in\G_{b,a}$ from (\ref{d3}). The analyticity, the uniqueness of the Dirichlet problem and the analytic continuation principle give that  (\ref{d2}) holds for $y\in\Omega_1(\G_1)\cap\Omega_1(\G_2)\se\{z_n\}.$

Define $v_1(\cdot,z_n,\ell):=\widetilde{u}_1(\cdot,z_n,\ell)|_{D_{z^*}}$,  $v_2(\cdot,z_n,\ell):=\widetilde{u}_2(\cdot,z_n,\ell)|_{D_{z^*}}$ where $D_{z^*}:=B_{\varepsilon_0}(z^*)\cap\Omega_1(\G_2)\cap\Omega_2(\G_1)$ which can be chosen as a Lipschitz domain with a sufficiently small $\varepsilon_0>0$.
According to the definitions, it is clear that $v_{1}$ and $v_{2}$ solve the following interior transmission problem 
\be\label{d5}
\left\{\begin{array}{lll}
         \Delta v_{1}+\kappa_{2,1}^2v_{1}=0 \qquad\textrm{in} \quad D_{z^*}, \\
         \Delta v_{2}+\kappa_1^2v_{2}=0 \qquad\;\;\textrm{in} \quad D_{z^*}, \\
         v_{2}-v_{1}=f^{\ell}_{1}\qquad\qquad \textrm{on} \quad \partial D_{z^*}, \\
         \partial_{\nu} (v_{2}-v_{1})=f^{\ell}_{2}\qquad\;\textrm{on} \quad \partial D_{z^*},
       \end{array}
\right.
\en
with $f^{\ell}_{1}\in H^{\frac{1}{2}}(\pa D_{z^*})$ and $f^{\ell}_{2}\in H^{-\frac{1}{2}}(\pa D_{z^*})$. 
Furthermore, it follows from (\ref{d2}) that $f^{\ell}_{1}=0$ and $f^{\ell}_{2}=0$ on $B_{\varepsilon_0}(z^*)\cap\G_1$. 
With the aid of this, we conclude from the well-posedness (cf.\cite{YZZ13}) of Problem (\ref{d5}) for sufficiently small $\varepsilon_0$ with the a priori estimates (\ref{c20}),(\ref{c26}), (\ref{c29}) in $L^p(1<p<2)$ for Problem (\ref{a1}) that the following estimate 
\be\label{d17}
\|\sum_{\ell=1,2}\nu_{\ell}(z^*)v_{1}(\cdot, z_n, \ell)\|_{L^2(D_{z^*})}+\|\sum_{\ell=1,2}\nu_{\ell}(z^*)v_{2}(\cdot,z_n, \ell)\|_{L^2(D_{z^*})}\leq C
\en
holds for uniformly $n\in\mathbb N$, where $\nu(z^*):=(\nu_1(z^*), \nu_2(z^*))$ denotes the upward unit normal at $z^*$. By linearity, it is clear that $\sum_{\ell=1,2}\nu_{\ell}(z^*)v_{2}(\cdot,z_n, \ell)$ is the solution to Problem (\ref{a1}) with $\G=\G_2$ associated with the incident wave $\sum_{\ell=1,2}\nu_{\ell}(z^*)\widetilde{u}^{\rm inc}(x,z_n,\ell)$.
Therefore, we have
\ben\no
\|\sum_{\ell=1,2}\nu_{\ell}(z^*)v_{2}(\cdot,z_n, \ell)\|_{L^2(D_{z^*})}
&\geq&\|\sum_{\ell=1,2}\nu_{\ell}(z^*)\widetilde{u}^{\rm inc}(\cdot,z_n,\ell)\|_{L^2(D_{z^*})}\\\label{d18}
&&-\|\sum_{\ell=1,2}\nu_{\ell}(z^*)\left[v_{2}(\cdot,z_n, \ell)-\widetilde{u}^{\rm inc}(\cdot,z_n,\ell)\right]\|_{L^2(D_{z^*})},
\enn
by the triangle inequality. A direct calculation leads to $\|\sum_{\ell=1,2}\nu_{\ell}(z^*)\widetilde{u}^{\rm inc}(\cdot,z_n,\ell)\|^2_{L^2(D_{z^*})}=O(n)$ for sufficiently large $n$. Since there is a positive distance between $z^*$ and $\Gamma_2$, it can be obtained by the well-posedness of Problem (\ref{a1}) with $\Gamma=\Gamma_2$ that the scattered field $v_{2}(\cdot,z_n, \ell)-\widetilde{u}^{\rm inc}(\cdot,z_n,\ell)$ is uniformly bounded for all $n\in\mathbb N$. Hence 
\ben
\|\sum_{\ell=1,2}\nu_{\ell}(z^*)v_{2}(\cdot,z_n, \ell)\|_{L^2(D_{z^*})}\rightarrow +\infty\quad{\rm as}\;\; n\to\infty
\enn
which contradicts with (\ref{d17}). So, $\G_1=\G_2$.

Now we are in position to show $\kappa_{2,1}=\kappa_{2,2}$. Let $\G:=\G_1=\G_2$ and assume $\kappa_{2,1}\neq \kappa_{2,2}$. We choose $z^*\in\G$ and define $z_n$ as (\ref{d1}). Recalling that $\widetilde{u}_{1}(\cdot,z_n,\ell)$, $\widetilde{u}_{2}(\cdot,z_n,\ell)$ denote the solutions of the scattering problems (\ref{a1}) with $(\G, \kappa_2, D,\mathcal{B})$ replaced by  $(\G, \kappa_{2,1}, D_1,\mathcal{B}_1)$,  $(\G, \kappa_{2,2}, D_2,\mathcal{B}_2)$, respectively,  with the point sources $\widetilde{u}^{\rm inc}(x,z_n,\ell)=\partial_{x_{\ell}}\Phi_{\kappa_1}(x,z_n)$. By the same analysis as (\ref{d2}), we conclude that
\be\label{d19}
 \widetilde{u}_{1}(x,z_n,\ell)=\widetilde{u}_{2}(x,z_n,\ell)\quad{\rm in} \quad\Omega_1.
 \en
 For $z^*\in\G$, we choose a Lipschitz domain $D'_{z^*}:=B_{\varepsilon_0}(z^*)\cap\Omega_2$ and define $\widetilde{v}_{1}(x,z_n,\ell):=\widetilde{u}_{1}(x,z_n,\ell)|_{D'_{z^*}}$ and  $\widetilde{v}_{2}(x,z_n,\ell):=\widetilde{u}_{2}(x,z_n,\ell)|_{D'_{z^*}}$. Then $\widetilde{v}_{1}$ and $\widetilde{v}_{2}$ satisfy the following interior transmission problem in a different domain $D'_{z^*}$
\ben\label{d20}
\left\{\begin{array}{lll}
         \Delta \widetilde{v}_{1}+\kappa_{2,1}^2\widetilde{v}_{1}=0 \qquad \textrm{in} \quad D'_{z^*}, \\
         \Delta \widetilde{v}_{2}+\kappa_{2,2}^2\widetilde{v}_{2}=0 \qquad \textrm{in} \quad D'_{z^*}, \\
         \widetilde{v}_{2}-\widetilde{v}_{1}=g^{\ell}_{1}\qquad\qquad \textrm{on} \quad \partial D'_{z^*}, \\
         \partial_{\nu} (\widetilde{v}_{2}-\widetilde{v}_{1})=g^{\ell}_{2}\qquad\;\textrm{on} \quad \partial D'_{z^*},
       \end{array}
\right.
\enn
with $g^{\ell}_{1}\in H^{\frac{1}{2}}(\pa D'_{z^*})$ and $g^{\ell}_{2}\in H^{-\frac{1}{2}}(\pa D'_{z^*})$. From (\ref{d19}), we deduce that $g^{\ell}_{1}=0$ and $g^{\ell}_{2}=0$ on $B_{\varepsilon_0}(z^*)\cap\G$. The remaining part of this proof is completely analogous to the proof of uniqueness for $\Gamma$, so we omit it here. 

Due to $\Gamma_1=\Gamma_2$ and $\kappa_{2,1}=\kappa_{2,2}$, the inverse problem is reduced to the case of determining the embedded obstacle from measurements in a known two-layered medium. It follows from the standard discussions (cf. \cite[Theorem 5.6]{CK13}) that $D_1=D_2$ and $\mathcal{B}_1=\mathcal{B}_2$. The proof is completed.
\end{proof}

\subsection{The case of an inhomogeneous medium $n(x)$}
In this subsection, we investigate the case where $D$ is a penetrable inhomogeneous medium. A global uniqueness is obtained which shows that the locally rough interface, the wave number in the lower half-space, and the inhomogeneous medium can be uniquely determined by the scattered field measured on $\Gamma_{b,a}$.

\begin{theorem}\label{thm8}
If $u_1^{s}(x,x_s)=u_2^{s}(x,x_s)$ for all $x,x_s\in\Gamma_{b,a}$, then $(\Gamma_1,\kappa_{2,1}, n_1)=(\Gamma_2,\kappa_{2,2}, n_2)$. 
\end{theorem}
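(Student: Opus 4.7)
My plan is to mirror the proof of Theorem \ref{thm7}, using the $L^p$ a priori estimate from Theorem \ref{thm6} in place of the one for impenetrable obstacles. The preliminary step is to observe that the generalized reciprocity relation \eqref{e4} remains valid for Problem \eqref{a2}: its derivation uses only Green's identities applied to two total fields generated by distinct point sources, and the integrability that \eqref{e3} requires in small balls around each source is supplied by Theorem \ref{thm6}. With \eqref{e4} in hand, the rest of the argument proceeds in three stages.

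First, I would prove $\Gamma_1=\Gamma_2$ by contradiction. Assuming $z^*\in\Gamma_1\setminus\Gamma_2$, after swapping if necessary I may take $z^*\in\Omega_1(\Gamma_2)$, and I choose $\varepsilon_0>0$ small enough that $\overline{B_{\varepsilon_0}(z^*)}$ is disjoint from $\Gamma_2$ and from $\overline{D_1}\cup\overline{D_2}$; this is possible since the embedded supports are bounded with positive distance from $\Gamma_1,\Gamma_2$, and it forces $n_j\equiv 1$ throughout $B_{\varepsilon_0}(z^*)$. Setting $z_n=z^*+(\delta_0/n)\nu(z^*)$ and combining the data equality on $\Gamma_{b,a}$ with analytic continuation and the reciprocity \eqref{e4}, I obtain $\widetilde u_1(\cdot,z_n,\ell)=\widetilde u_2(\cdot,z_n,\ell)$ on $\Omega_1(\Gamma_1)\cap\Omega_1(\Gamma_2)\setminus\{z_n\}$ verbatim from \eqref{d2}. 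The restrictions $v_j=\widetilde u_j(\cdot,z_n,\ell)|_{D_{z^*}}$ to $D_{z^*}=B_{\varepsilon_0}(z^*)\cap\Omega_1(\Gamma_2)\cap\Omega_2(\Gamma_1)$ then satisfy the same interior transmission system \eqref{d5}, with vanishing jumps on $B_{\varepsilon_0}(z^*)\cap\Gamma_1$. The well-posedness of \eqref{d5} for sufficiently small $\varepsilon_0$, combined with Theorem \ref{thm6}, produces a uniform-in-$n$ $L^2$-bound for $\sum_{\ell=1,2}\nu_\ell(z^*)v_2(\cdot,z_n,\ell)$, while the explicit blow-up of the normal-derivative source term against the bounded scattered part gives the sought contradiction, exactly as in Theorem \ref{thm7}.

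With $\Gamma:=\Gamma_1=\Gamma_2$, the equality $\kappa_{2,1}=\kappa_{2,2}$ is obtained by the same template on a domain $D'_{z^*}=B_{\varepsilon_0}(z^*)\cap\Omega_2$ with $\varepsilon_0$ shrunk so that $D'_{z^*}\subset\Omega_2\setminus(\overline{D_1}\cup\overline{D_2})$; the interior transmission system there features the two different wave numbers $\kappa_{2,1},\kappa_{2,2}$ and its analysis is identical to the corresponding step in Theorem \ref{thm7}. Once the two-layered background is fully identified, analytic continuation propagates the data identity to all of $\Omega_1$, and a standard uniqueness theorem for the inverse medium problem in a known two-layered background (cf.\ \cite{CK13}) yields $n_1=n_2$.

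The step I expect to be the most delicate is transferring the blow-up/boundedness dichotomy from the impenetrable setting. Concretely, one must verify that the constant in the $L^p$ estimate of Theorem \ref{thm6} does not deteriorate as the source $z_n$ approaches the interface, and that the well-posedness of the interior transmission problem \eqref{d5} with $L^p$ Cauchy data yields a bound independent of $n$. Both points reduce to constant-coefficient analysis on the small domain $D_{z^*}$ because $\varepsilon_0$ is chosen so that $n_j\equiv 1$ there; this is precisely what makes the strategy used for Theorem \ref{thm7} transplantable to Problem \eqref{a2}.
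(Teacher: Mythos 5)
Your proposal matches the paper's proof, which simply invokes the same arguments as Theorem \ref{thm7} to obtain $\Gamma_1=\Gamma_2$ and $\kappa_{2,1}=\kappa_{2,2}$ and then reduces the problem to recovering the refractive index in a known two-layered background; your elaboration of why the Theorem \ref{thm7} machinery transfers (choosing $\varepsilon_0$ so that $n_j\equiv 1$ near $z^*$, and using Theorem \ref{thm6} for the $L^p$ a priori estimate) is exactly the intended reading. The only small discrepancy is the final citation: since the setting is two-dimensional, the paper appeals to Bukhgeim's result \cite{ALB08} for the concluding step $n_1=n_2$ rather than the three-dimensional argument in \cite{CK13}.
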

\begin{proof}
It follows from the same arguments as Theorem \ref{thm7} that $\Gamma_1=\Gamma_2$ and $\kappa_{2,1}=\kappa_{2,2}$.
Thus, the inverse problem is reduced to recover the inhomogeneous medium into a known two-layered medium and the uniqueness $n_1=n_2$ directly follows from a standard discussion \cite{ALB08}.
\end{proof}

\end{document}